\documentclass[final,onefignum,onetabnum,nohypdvips]{siamart251216}
\usepackage{xcolor}

\usepackage{braket,amsfonts}
\usepackage{amssymb,mathrsfs}
\usepackage{yhmath}
\usepackage{amsmath}
\usepackage{mathtools}
\usepackage{mathdots}
\allowdisplaybreaks
\usepackage{setspace}
\usepackage{array}
\usepackage{multirow}
\usepackage{soul}
\usepackage{amsmath}
\usepackage[
  letterpaper,
  right=1.2in,
  voffset=0pt
]{geometry}

\usepackage[caption=false]{subfig}
\usepackage{pgfplots}

\usepackage{mathtools}
\newsiamthm{claim}{Claim}
\newsiamthm{exmp}{Example}
\newsiamremark{remark}{Remark}
\newsiamremark{hypothesis}{Hypothesis}
\crefname{hypothesis}{Hypothesis}{Hypotheses}

\usepackage[T1]{fontenc}
\usepackage[utf8]{inputenc}

\usepackage{booktabs}

\usepackage{algorithmic}

\usepackage{graphicx,epstopdf}

\Crefname{ALC@unique}{Line}{Lines}

\usepackage{amsopn}

\usepackage{xspace}
\usepackage{bold-extra}
\usepackage[most]{tcolorbox}

\colorlet{texcscolor}{blue!50!black}
\colorlet{texemcolor}{red!70!black}
\colorlet{texpreamble}{red!70!black}
\colorlet{codebackground}{black!25!white!25}

\lstdefinestyle{siamlatex}{%
  style=tcblatex,
  texcsstyle=*\color{texcscolor},
  texcsstyle=[2]\color{texemcolor},
  keywordstyle=[2]\color{texemcolor},
  moretexcs={cref,Cref,maketitle,mathcal,text,headers,email,url},
}

\tcbset{%
  colframe=black!75!white!75,
  coltitle=white,
  colback=codebackground, 
  colbacklower=white, 
  fonttitle=\bfseries,
  arc=0pt,outer arc=0pt,
  top=1pt,bottom=1pt,left=1mm,right=1mm,middle=1mm,boxsep=1mm,
  leftrule=0.3mm,rightrule=0.3mm,toprule=0.3mm,bottomrule=0.3mm,
  listing options={style=siamlatex}
}

\newtcblisting[use counter=example]{example}[2][]{%
  title={Example~\thetcbcounter: #2},#1}

\newtcbinputlisting[use counter=example]{\examplefile}[3][]{%
  title={Example~\thetcbcounter: #2},listing file={#3},#1}

\DeclareTotalTCBox{\code}{ v O{} }
{ 
  fontupper=\ttfamily\color{black},
  nobeforeafter,
  tcbox raise base,
  colback=codebackground,colframe=white,
  top=0pt,bottom=0pt,left=0mm,right=0mm,
  leftrule=0pt,rightrule=0pt,toprule=0mm,bottomrule=0mm,
  boxsep=0.5mm,
  #2}{#1}

\patchcmd\newpage{\vfil}{}{}{}
\numberwithin{equation}{section}

\newcommand{\ds}{\mathrm ds}

\newcommand{\dtheta}{\mathrm d\theta}

\begin{tcbverbatimwrite}{tmp_\jobname_header.tex}
\title{Coefficient Recombination and Error Estimates for the L2 Method for Subdiffusion on Nonuniform Meshes\thanks{Submitted in the October of 2026.}}

\author{
Chaoyu Quan
\thanks{School of Science and Engineering, The Chinese University of Hong Kong (Shenzhen), Shenzhen 518172, China; Shenzhen International Center for Industrial and Applied Mathematics, Shenzhen Research Institute of Big Data, Shenzhen, 518000, China (\email{quanchaoyu@cuhk.edu.cn}).}
\and Shangyu Wang
\thanks{School of Science and Engineering, The Chinese University of Hong Kong, Shenzhen, 518172, Guangdong, People's Republic of China (\email{wangshangyu@link.cuhk.edu.cn}).}
\and Shijie Wang
\thanks{Shenshan Experimental School (Shenshan Branch of Shenzhen Second Senior High School), Shenzhen, 518200, China; Department of Mathematics, and SUSTech International Center for Mathematics, Southern University of Science and Technology, Shenzhen, China (\email{pariwong@163.com}).}
\and Jiang Yang
\thanks{Department of Mathematics, and SUSTech International Center for Mathematics, Southern University of Science and Technology, Shenzhen, China}
(\email{yangj7@sustech.edu.cn}).}

\headers{$L^2$-norm convergence of L2 method}{C. Quan, S. Wang, S. Wang, and J. Yang}
\end{tcbverbatimwrite}
\input{tmp_\jobname_header.tex}

\begin{document}
\maketitle
\sloppy

\begin{abstract}
The L2 discretization of the Caputo derivative is attractive for its high-order accuracy, but its analysis on nonuniform temporal meshes is complicated due to the lack of a monotone structure in the discrete coefficients. This difficulty is particularly significant for deriving sharp $L^2$ error estimates in the presence of the typical weak singularity of subdiffusion solutions near the initial time. In this work, we develop a coefficient-recombination framework for the L2 method on nonuniform meshes. Under the adjacent step-ratio condition
\(
0.86\le \rho_k\le 1.16,
\ k\ge2,
\)
we prove that the recombined L2 coefficients are positive and monotone.
A key ingredient is a cross-row recombination technique: with the recombination
parameter associated with a fixed time level, all coefficient rows retain positivity and monotonicity, which allows us to close an
induction for the $L^2$ error analysis.
For solutions exhibiting the standard initial weak singularity, we derive an $L^2$ error estimate on nonuniform meshes satisfying the step-ratio condition. On a modified graded mesh with grading parameter $r>1$, the estimate yields the convergence rate
\(
\mathcal{O}\!\left(
N^{-\min\{r\alpha,\,3-\alpha\}}
\right),
\)
and hence recovers the optimal order $\mathcal{O}(N^{-(3-\alpha)})$ whenever $r\ge(3-\alpha)/\alpha$.
Numerical experiments on random nonmonotone meshes and modified graded meshes confirm the theoretical estimates.
\end{abstract}


\begin{keywords}
Subdiffusion equation, L2 method, nonuniform meshes, coefficient recombination, graded meshes
\end{keywords}

\begin{AMS}
35R11, 65M15
\end{AMS}

\section{Introduction}

Time-fractional subdiffusion models, which can be derived from
continuous-time random walks \cite{gorenflo2002time,MK2000}, are widely used to describe processes
with memory effects and temporal nonlocality \cite{podlubny1998fractional,benson2000application,gafiychuk2008mathematical}. Consider the following subdiffusion equation:
\begin{equation}\label{model_subdiffusion}
\begin{aligned}
    \partial_t^\alpha u(t,x) & = \Delta u(t,x)+f(t,x),&& (t,x)\in (0,T] \times \Omega,\\
    u(t,x) &= 0,&& (t,x)\in (0,T] \times \partial\Omega,\\
    u(0,x)& = u^0(x),&& x\in \Omega,
\end{aligned}
\end{equation}
where $\Omega$ is a bounded Lipschitz domain in $\mathbb R^d$, $f(t,x) \in L^\infty([0,\infty);L^2(\Omega)) \cap BV([0,\infty); L^2(\Omega))$ is a temporal bounded variation function, $u^0\in H_0^1(\Omega)$, and $\partial_t^\alpha u(t,x)$ is Caputo derivative \cite{caputo1967linear} defined as
\begin{equation*}
    \partial_t^\alpha u \coloneqq \frac{1}{\Gamma(1-\alpha)} \int_0^t \frac{u'(s)}{(t-s)^\alpha} \, ds, \quad 0<\alpha<1.
\end{equation*}

Over the past decade, numerous numerical methods have been proposed and
analyzed to solve the time-fractional diffusion equation, many of which
utilize uniform time meshes. These include, for example, the L1 scheme
analyzed by Lin and Xu \cite{lin2007finite}, the L2-1$_\sigma$ formula
introduced by Alikhanov \cite{alikhanov2015new}, the L1-2 formula derived
by Gao, Sun, and Zhang \cite{gao2014new}, and the L2 scheme analyzed by
Lv and Xu \cite{lv2016error}. More recently, Alikhanov and Huang \cite{alikhanov2021high} proposed an L2 approximation and corresponding schemes for the subdiffusion equation with variable coefficients.

However, the exact solution to the time-fractional diffusion equation may exhibit low regularity near the initial time.
This lack of regularity can lead to a significant reduction in the accuracy of numerical methods.
To address this issue, the use of non-uniform meshes has been proposed.
In particular, the concept of graded meshes has emerged as an effective solution, ensuring that the computational results align with the theoretical accuracy.
Stynes,
O'Riordan, and Gracia \cite{stynes2017error} derived a sharp error
estimate for the L1 scheme on graded meshes, explicitly characterizing
the dependence of the convergence rate on the temporal regularity of
the solution and the mesh grading parameter. Kopteva
subsequently developed a more general analysis
of the L1 scheme in two and three spatial dimensions, covering both
finite difference and finite element spatial discretizations and error
estimates in the spatial \(L^\infty\) and \(L^2\) norms \cite{kopteva2019error} , and later
treated semilinear subdiffusion equations using upper and lower
solutions \cite{kopteva2020semilinear}.
For higher-order temporal discretizations, Chen and Stynes
\cite{chen2019error} analyzed the L2-\(1_\sigma\) formula on fitted
meshes combining a graded initial part with a quasiuniform part away
from \(t=0\), and established high-order convergence for both
initial-value and initial-boundary value problems.
Kopteva and Meng \cite{kopteva2020error} used barrier functions and the inverse
monotonicity of the discrete operators to derive sharp
pointwise-in-time error bounds for the L1 and L2-\(1_\sigma\) schemes
on quasi-graded meshes with arbitrary degrees of grading.
For the L2 scheme of order \(3-\alpha\), whose discrete
operator lacks an M-matrix structure, Kopteva \cite{kopteva2021error} represented
the operator as a product of two M-matrices under suitable mesh
conditions, thereby recovering the inverse monotonicity and allowing
the same technique to be applied.
More recently, Kelly and Kopteva \cite{kelly2026pointwise} further extended graded-mesh pointwise analysis to L1 discretizations of semilinear and quasilinear subdiffusion problems.

The analysis becomes more difficult on general nonuniform meshes
because the discrete Caputo coefficients depend on the current time
level, destroying the structure of uniform-mesh formulas.
For the nonuniform L1 scheme applied to linear reaction-subdiffusion
equations, Liao, Li, and Zhang \cite{liao2018sharp} established an
analytical framework based on complementary convolution kernels, a
discrete fractional Gr\"onwall inequality, and global consistency
analysis, obtaining sharp error estimates.
The scope of the nonuniform
L1 analysis was further broadened by Wang and Stynes
\cite{wang2023optimal}, who proved that the numerical solution
preserves the optimal long-time decay rate on a general class of
nonuniform meshes. Beyond the L1 scheme, Liao, McLean, and Zhang
\cite{liao2019discrete,liao2018second} extended the discrete
fractional Gr\"onwall framework to general discrete Caputo operators
and applied it to the nonuniform L2-\(1_\sigma\) scheme.  The L2 method is
more challenging to analyze because of the complex structure and lack
of monotonicity of its discrete coefficients. Quan and Wu
\cite{quan2022h} proved the positive semidefiniteness of a bilinear
form associated with the L2 operator under suitable restrictions on
adjacent time-step ratios and thereby established long-time
\(H^1\)-stability and corresponding \(H^1\)-error
estimates on general nonuniform meshes.


Against this background, we study the $L^2$-norm convergence of
the L2 method on nonuniform temporal meshes satisfying the
two-sided adjacent step-ratio condition $0.86 \le \rho_k := \tau_k/\tau_{k-1} \le 1.16$ for
$k\ge2$. The
\(H^1\)-error estimate in \cite{quan2022h} gives the rate
\((5-\alpha)/2\) on modified graded meshes and therefore does not
directly yield the optimal \(L^2\)-norm rate \(3-\alpha\). In contrast to the quasi-graded-mesh analysis in \cite{kopteva2021error}, our general $L^2$-error estimate applies to nonuniform meshes satisfying the above step-ratio condition.
Inspired by the coefficient-recombination technique introduced
by Lv and Xu \cite{lv2016error} for uniform meshes, we develop
a cross-row recombination technique for nonuniform meshes, which forms the main analytical contribution of this work.
At each time level, the discrete fractional derivative in the error
equation is a weighted sum of error increments, with coefficients
depending on the current time level. To estimate the error at an
arbitrary fixed time level, we choose the recombination parameter
associated with the current time level and use the same parameter to rewrite
all error equations up to this time level in terms of recombined errors.
Under the restriction on adjacent time-step ratios, we prove
that the recombined coefficients are positive and monotone.
The common error recombination allows the energy estimates at
different time levels to be combined in the induction. Together with the coefficient bounds, this yields \(L^2\)-error estimates for solutions with the typical initial weak singularity and recovers the optimal convergence rate \(3-\alpha\) on modified graded meshes with a suitable grading parameter.



The remainder of this paper is organized as follows. Section~\ref{sec_2}
recalls the L2 approximation of the Caputo derivative on nonuniform
temporal meshes and summarizes the basic properties of its
coefficients. Section~\ref{sec_3} develops the cross-row recombination
technique and establishes the positivity, monotonicity, and
coefficient inequalities needed for the energy analysis.
Based on these results, Section~\ref{sec_4} derives \(L^2\)-error estimates
on nonuniform meshes satisfying the step-ratio condition and
establishes the optimal convergence rate on suitably modified
graded meshes. Section~\ref{sec:numerical} tests the error estimate
on random meshes and the optimal convergence rate on modified
graded meshes, and compares the efficiency of nonuniform and
uniform time stepping.
Section~\ref{sect5} concludes the paper.
The appendices provide the technical estimates needed to establish
the positivity and monotonicity of the recombined coefficients.

Throughout the paper, \(|\cdot|_{\infty}\) denotes the vector
maximum norm, while \((\cdot,\cdot)\) and \(\|\cdot\|_0\) denote the
standard inner product and norm in \(L^2(\Omega)\), respectively.
Unless otherwise specified, \(C>0\) denotes a generic constant that may
vary from line to line. It may depend on \(T\), \(\alpha\), \(\Omega\), and the fixed bounds on the adjacent
time-step ratios.
On modified graded meshes, it may additionally depend on the
fixed parameters \(r\) and \(k_0\).

\section{Preliminaries on L2 approximation}\label{sec_2}
Take a nonuniform time mesh $0=t_0<t_1<\cdots<t_N=T$. Define the time steps and adjacent step ratios by
$\tau_j:=t_j-t_{j-1}$ for $1\le j\le N$
and \(
\rho_j:=\frac{\tau_j}{\tau_{j-1}}
\) for $ 2\le j\le N$, respectively.

Following \cite{lv2016error}, let $H_1^1$ be the linear interpolant
and $H_2^j$ the quadratic interpolant determined by
\begin{equation}\label{eq:H2j}
H_1^1(t_\ell)=u^\ell,\quad \ell=0,1,
\qquad
H_2^j(t_\ell)=u^\ell,\quad \ell=j-1,j,j+1.
\end{equation}
For $k=1$, we use $H_1^1$ on $[t_0,t_1]$.
For $k\ge2$, we use $H_2^j$ on $[t_{j-1},t_j]$ for
$1\le j\le k-1$, and $H_2^{k-1}$ also on $[t_{k-1},t_k]$.
Integrating the derivative of this interpolant against the Caputo
kernel gives the L2 operator
\begin{equation}\label{eq:Lk1}
\begin{aligned}
L_1^\alpha u& \coloneqq \frac{1}{\Gamma(2-\alpha)\tau_1^{\alpha}}\delta_1 u,\\
    L_k^\alpha u
  &\coloneqq\frac{1}{\Gamma(1-\alpha)}\left( ( c_{k}^{(k)} + c_{k-1}^{(k)}) \delta_k u- a_{k}^{(k)}\delta_{k-1} u - a_{1}^{(k)}\delta_1 u  +\sum_{j=2}^{k-1}  (c_{j-1}^{(k)}- a_{j}^{(k)}) \delta_j u \right), \quad k\geq 2,
    \end{aligned} 
\end{equation}
where $\delta_j u \coloneqq u^j-u^{j-1}$, and the L2 coefficients $a_{j}^{(k)}$ and $c_{j}^{(k)}$ are computed by
\begin{equation}
\begin{aligned}\label{eq:aj_cj}
a_{j}^{(k)} &  = \int_0^1 \frac{-2 \tau_j(1-\theta)-\tau_{j+1}}{(\tau_{j}+\tau_{j+1})(t_k-(t_{j-1}+\theta \tau_j))^\alpha}\,\dtheta,\qquad 1\leq j\leq k-1,\\
 c_{j}^{(k)}  &= \int_0^1 \frac{\tau_j^2(2\theta-1)}{\tau_{j+1}(\tau_{j}+\tau_{j+1})(t_k-(t_{j-1}+\theta \tau_j))^\alpha}\,\dtheta,\qquad 1\leq j\leq k-1
\end{aligned}
\end{equation}
and
\begin{equation}
\begin{aligned}
    a_{k}^{(k)} & = \int_0^1 \frac{\tau_k^2(2\theta-1)}{\tau_{k-1}(\tau_{k-1}+\tau_{k})(t_k-(t_{k-1}+\theta \tau_k))^\alpha}\dtheta,\\
    c_{k}^{(k)} & = \int_0^1 \frac{2\tau_k\theta+\tau_{k-1}}{(\tau_{k-1}+\tau_{k})(t_k-(t_{k-1}+\theta \tau_k))^\alpha}\,\dtheta.
\end{aligned}
\end{equation}
Explicit evaluations of these integrals are given in
\cite{quan2022h}. In particular, the terminal coefficients satisfy
\begin{equation}\label{eq:a_k1}
\begin{aligned}
    a_{k}^{(k)}
    &=\frac{\alpha\tau_k^2}{(2-\alpha)(1-\alpha)\tau_{k-1}(\tau_{k-1}+\tau_{k}) \tau_k^\alpha},\\
   c_{k}^{(k)}
    &=\frac{1}{(1-\alpha)\tau_k^\alpha}+\frac{\alpha\tau_k}{(2-\alpha)(1-\alpha)(\tau_{k-1}+\tau_{k}) \tau_k^\alpha}.
    \end{aligned}
\end{equation}
As shown in \cite{quan2022h}, the L2 coefficients have the properties: $ a_{j}^{(k)}<0$, $ c_{j}^{(k)}>0$ for $1\leq j\leq k-1$, and $a_{k}^{(k)}>0$, $ c_{k}^{(k)}>0$.

\section{Recombination of L2 operator and its properties}\label{sec_3}

Given a nonuniform mesh $\{\tau_k\}_{1\leq k\leq N}$ with $N$ time steps, the L2 scheme for the subdiffusion equation \eqref{model_subdiffusion} is
\begin{equation}\label{eq:sch_sub_graded}
\begin{aligned}
    L_k^\alpha u 
    &= \Delta u^k+f^k,
\end{aligned}    
\end{equation}
where $f^k=f(t_k,x)$.
The error equation associated with scheme \eqref{eq:sch_sub_graded} is then given by
\begin{equation}\label{error_eq}
L_k^\alpha e = \Delta e^k - r^k, \quad 1 \leq k \leq N,
\end{equation}
where $e^k \coloneqq u(t_k) - u^k$ is the error at time point $t_k$ and the truncation error $r^k$ is given by 
\begin{equation}\label{eq:localrk}
r^k \coloneqq \frac{1}{\Gamma(1-\alpha)}\int _0^{t_k}(t_k-s)^{-\alpha}\partial_s[u(s)-I_2u(s)]\,\ds,\quad k\ge 1.
\end{equation} 
Here, for $k=1$, $I_2u$ is the linear interpolant of $u(t_0)$ and $u(t_1)$ on $(t_0,t_1)$.
For $k\geq2$, $I_2 u=H_{2}^j u$ on $(t_{j-1},t_j)$ for $1\leq j\leq k-1$ and $I_2 u=H_{2}^{k-1} u$ on $(t_{k-1},t_k)$.
$H_2^j u(t)$ represents the quadratic interpolating polynomial obtained by replacing the numerical solution values $\{u^{j-1}, u^j, u^{j+1}\}$ in \eqref{eq:H2j} with the exact solution values $\{u(t_{j-1}), u(t_j), u(t_{j+1})\}$.
From the definition of L2 operator \eqref{eq:Lk1}, we can write
\begin{equation}\label{def:L2_0}
L_k^\alpha e = \frac{1}{\Gamma(1-\alpha)}\sum_{j=1}^k {m}_{k, j} \delta_j e, \qquad 1 \leq k \leq N,
\end{equation}
where $\delta_j e = {e}^j - {e}^{j-1}$ and $m_{k,j}$ are associated with the L2 coefficients.
As a consequence, the error equation \eqref{error_eq} can be written in the matrix form of
\begin{equation}
\frac{1}{\Gamma(1-\alpha)}\mathbf{M}
 \begin{pmatrix}
\delta_1 e \\
\delta_2 e \\
\vdots \\
\delta_N e
\end{pmatrix}
 = 
 \Delta 
 \begin{pmatrix}
e^1 \\
e^2 \\
\vdots \\
e^N
\end{pmatrix}
-
\begin{pmatrix}
r^1 \\
r^2 \\
\vdots \\
r^N
\end{pmatrix},
\end{equation}
where the coefficient matrix
{\footnotesize
\begin{equation}\label{matM}
\mathbf M \coloneqq (m_{k,j})_{1\leq k,j\leq N}=
\begin{pmatrix}
(1-\alpha)^{-1}\tau_1^{-\alpha} & \\
-a_1^{(2)}-a_2^{(2)}& c^{(2)}_1+c^{(2)}_2 \\
-a_1^{(3)}& d^{(3)}_2-a_3^{(3)}& c^{(3)}_2+c^{(3)}_3 \\
-a_1^{(4)} & d^{(4)}_2 & d_3^{(4)}-a_4^{(4)}
& c^{(4)}_3+c^{(4)}_4\\
\vdots& \vdots & \ddots &\ddots & \ddots\\
-a_1^{(N)}& d^{(N)}_2&\cdots& d^{(N)}_{N-2}
& d^{(N)}_{N-1}-a^{(N)}_{N}&c^{(N)}_{N-1}+c^{(N)}_N
\end{pmatrix},
\end{equation}
}
with
\begin{equation}
\label{eq:d}    
 d_{j}^{(k)}\coloneqq c_{j-1}^{(k)}- a_{j}^{(k)}.
\end{equation}
Note that $d_{j}^{(k)} > 0$
for $2\le j\le k-1 \leq N-1$ (see \cite{quan2022h} for the positivity of $d_j^{(k)}$). Moreover, we have the following lemma on the positivity and monotonicity of $m_{k,j}$.
\begin{lemma}\label{origin_matrix_row}(see \cite{quan2022h})
The coefficient matrix in \eqref{matM} satisfies the following properties:
\begin{itemize}
\item[(i)]$m_{k,j}>0$ for all $1\leq j\leq k $ and $j \not = k-1$; 
\item [(ii)] If the nonuniform mesh $\{\tau_j\}_{1\leq j\leq N}$ satisfies
\begin{equation}\label{condition:rho}
    \frac{1}{\rho_{j+1}}\ge\frac{1}{\rho_{j}^2(1+\rho_{j})}-3\quad
 \mbox{with}\quad \rho_j \coloneqq \tau_j/\tau_{j-1}, ~ 2\leq j\leq N-1,
\end{equation}
then
\begin{equation}
m_{k,j} \leq m_{k,j+1}, \quad \text{for all } 1\leq j \leq k-3, ~4\leq k\leq N. 
\end{equation}
\end{itemize}
\end{lemma}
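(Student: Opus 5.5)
Both parts reduce to sign and monotonicity information about the kernel $w(s):=(t_k-s)^{-\alpha}$ inside the integrals \eqref{eq:aj_cj}. We treat the entries of \eqref{matM} by position. The diagonal entries are $m_{1,1}=(1-\alpha)^{-1}\tau_1^{-\alpha}>0$ and $m_{k,k}=c_{k-1}^{(k)}+c_k^{(k)}$; the latter is positive because $c_j^{(k)}>0$ for $j\le k-1$ and $c_k^{(k)}>0$ by \eqref{eq:a_k1}. The first column gives $m_{k,1}=-a_1^{(k)}>0$ for $k\ge3$, and $m_{2,1}=-a_1^{(2)}-a_2^{(2)}$ is the single entry with $j=k-1$ when $k=2$, so it is excluded. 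The interior entries $m_{k,j}=d_j^{(k)}$ with $2\le j\le k-2$ are positive by the positivity of $d_j^{(k)}$ quoted from \cite{quan2022h}. This proves (i). For completeness one can also check $c_j^{(k)}>0$ directly: the integrand is $(2\theta-1)$ times $w(t_{j-1}+\theta\tau_j)$, which is increasing in $\theta$, so pairing $\theta$ with $1-\theta$ gives a positive integral. Likewise $-a_j^{(k)}>0$ because its integrand is positive.

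For (ii) we must show $d_j^{(k)}\le d_{j+1}^{(k)}$ for $2\le j\le k-3$, together with the first step $-a_1^{(k)}\le d_2^{(k)}$. Writing $d_j^{(k)}=c_{j-1}^{(k)}-a_j^{(k)}$, we substitute $s=t_{j-1}+\theta\tau_j$ and express each coefficient as an integral of $w(s)$ against an explicit linear weight:
\begin{equation*}
-a_j^{(k)}=\int_{t_{j-1}}^{t_j}\frac{2(t_j-s)+\tau_{j+1}}{\tau_j(\tau_j+\tau_{j+1})}\,w(s)\,\mathrm ds,
\end{equation*}
with a similar formula for $c_{j-1}^{(k)}$ on $[t_{j-2},t_{j-1}]$. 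In this form $d_j^{(k)}$ is $\int w\,\phi_j$ for a piecewise-linear weight $\phi_j$ supported on $[t_{j-2},t_j]$ with a sign-changing piece on the left interval. The difference $d_{j+1}^{(k)}-d_j^{(k)}=\int w(\phi_{j+1}-\phi_j)$ is then the integral of the increasing function $w$ against a signed weight $\psi:=\phi_{j+1}-\phi_j$ supported on $[t_{j-2},t_{j+1}]$.

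The standard tool here is a second-mean-value or Abel argument: if $\psi$ has a nonnegative total integral and a nonnegative tail integral $\Psi(s)=\int_s^{t_{j+1}}\psi\ge0$ for all $s$, then $\int w\psi\ge0$, since $w$ is positive and increasing. We therefore compute $\Psi$ piecewise. On each of the three subintervals $\psi$ is linear, so $\Psi$ is quadratic, and it suffices to check nonnegativity at the breakpoints $t_{j-2},t_{j-1},t_j$ and at any interior critical point. Normalizing by $\tau_j$, these conditions become explicit inequalities in $\rho_j$ and $\rho_{j+1}$. We expect the binding one to be the value at $t_{j-1}$, where the negative part of $c_{j-1}$'s weight sits, and we expect it to reduce exactly to $\tfrac{1}{\rho_{j+1}}\ge\tfrac{1}{\rho_j^2(1+\rho_j)}-3$, which is condition \eqref{condition:rho}. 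The case $j=1$, namely $-a_1^{(k)}\le d_2^{(k)}$, is handled the same way with $\phi_1$ supported only on $[t_0,t_1]$.

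The main obstacle is this tail-integral verification. If the pure Abel argument turns out not to be sharp enough, the fallback is to use the convexity of $w$ in addition to its monotonicity: first match the moments of $\psi$ on each interval with a weighted trapezoid comparison, and only then apply monotonicity. The goal is still to reduce everything to the single ratio inequality \eqref{condition:rho}, which holds uniformly in $k$ because the weights do not depend on $t_k$.
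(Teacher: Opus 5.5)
Part (i) is correct. You do not even need to cite the positivity of $d_j^{(k)}$, because $d_j^{(k)}=c_{j-1}^{(k)}+(-a_j^{(k)})$ is a sum of two positive terms. In (ii) you reduce to the right inequalities. For $j=1$ your Abel argument works with no mesh condition: $\Psi$ is a concave quadratic on $[t_0,t_1]$ rising from $0$ to $1$, and it is nonnegative on $[t_1,t_2]$. (The paper gives no proof of its own and only cites \cite{quan2022h}.)

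The gap is in (ii) for $2\le j\le k-3$. There your tail criterion fails for every mesh. The $c$-weights have zero integral and the $a$-weights have unit integral, so $\Psi(t_{j-1})=(0-1)+1=0$ identically. On the first interval,
\[
\Psi(s)=-\int_s^{t_{j-1}}\phi_j=-\frac{(s-t_{j-2})(t_{j-1}-s)}{\tau_j(\tau_{j-1}+\tau_j)}<0,\qquad t_{j-2}<s<t_{j-1}.
\]
So the value at $t_{j-1}$ carries no information about $\rho_j,\rho_{j+1}$, and \eqref{condition:rho} cannot come from it. Worse, positivity and monotonicity of $w$ can never suffice here. For $w=\epsilon+\mathbf 1_{[s_0,\infty)}$ with $s_0\in(t_{j-2},t_{j-1})$, you get $\int w\psi=\Psi(s_0)<0$. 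Convexity is therefore the essential ingredient, not a fallback, and your last paragraph does not say how it enters.

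The missing step is a second integration by parts. Since $\int\psi=0$,
\[
\int_{t_{j-2}}^{t_{j+1}} w\psi\,\mathrm ds=w'(t_{j-2})\,\Xi(t_{j-2})+\int_{t_{j-2}}^{t_{j+1}} w''\,\Xi\,\mathrm ds,\qquad \Xi(s):=\int_s^{t_{j+1}}\Psi,
\]
and $w',w''>0$ for $w(s)=(t_k-s)^{-\alpha}$. Now $\Psi\le0$ on $[t_{j-2},t_{j-1}]$ and $\Psi\ge0$ on $[t_{j-1},t_{j+1}]$; on the middle interval $\Psi$ is concave with end values $0$ and $1$. Hence $\Xi\ge0$ everywhere if and only if $\Xi(t_{j-2})=\int s\,\psi(s)\,\mathrm ds\ge0$, that is, the inequality for linear $w$.

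First use monotonicity only on the last interval, $-a_{j+1}^{(k)}\ge w(t_j)$, which replaces that piece by a unit mass at $t_j$. The first moment then equals
\[
\frac{\tau_j}{6}\left(\frac1{\rho_{j+1}}+3-\frac{1}{\rho_j^2(1+\rho_j)}\right),
\]
which is exactly \eqref{condition:rho}. The term $\tau_j/[\rho_j^2(1+\rho_j)]=\tau_{j-1}^3/[\tau_j(\tau_{j-1}+\tau_j)]$ is six times the first moment of the $c_{j-1}^{(k)}$-weight. So your intuition about which piece is binding is right, but it acts through the first moment, not through the tail value at $t_{j-1}$. If you keep the full last piece instead, the extra positive term $\rho_{j+1}(2+3\rho_{j+2})/(1+\rho_{j+2})$ appears inside the parentheses.
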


\subsection{Recombination of L2 coefficients}
One main difficulty in analyzing the error equation \eqref{error_eq} of the L2 method is that in the cumulative form of $\delta_je$, the L2 coefficients are not monotonic. 
To overcome this obstacle, recombination techniques are proposed. 
In the case of uniform meshes, Lv and Xu employ a recombination technique by introducing a parameter defined as a negative half of the ratio between the coefficients of $e^{k-1}$ and $e^k$ \cite{lv2016error}.
Under the assumption that the exact solution to the subdiffusion equation is three times continuously differentiable in time, the optimal $L^2$-norm convergence of order $3-\alpha$ is achieved for the L2 method with uniform meshes. However, it is well-known that this regularity assumption is not satisfied even for smooth initial data. 
Later, to relax the regularity assumption, Kopteva \cite{kopteva2021error} employed a similar recombination technique to derive the inverse-monotonicity of the discrete fractional-derivative operator and achieve the pointwise optimal convergence of order $3-\alpha$ on quasi-graded meshes.

In the case of nonuniform meshes, we introduce the same recombination parameter as \cite{lv2016error}:
\begin{equation}\label{eta}
\eta_1 := 0, \qquad
{\eta_k} 
\coloneqq \frac12 \left(1-\frac{m_{k,k-1}}{m_{k,k}}\right), \quad 2 \leq k \leq N.
\end{equation}
Consequently, the L2 operator in \eqref{def:L2_0} could be reformulated as
\begin{equation}\label{new_scheme}
L_k^\alpha {e} = \frac{1}{\Gamma(1-\alpha)}\sum_{j=1}^k \bar{m}_{k, j} (\bar{e}^j - \bar{e}^{j-1}), \quad 1 \leq k \leq N,
\end{equation}
where $\bar{e}^j \coloneqq e^j - {\eta_k} e^{j-1},~1\leq j\leq k$ and $\bar e^0 \coloneqq
e^0 = 0$.

Successive substitution of $e^j=\bar e^j+\eta_ke^{j-1}$ into
\eqref{def:L2_0} yields
\begin{equation}\label{def_bar_m}
\bar{m}_{k, j}= 
\sum_{i=j}^{k} m_{k,i}\eta_k^{i-j},\quad 1\leq j\leq k.
\end{equation}


\subsection{Cross-row recombination technique}
\label{sec:cross-row-technique}
We first establish some bounds for the recombined parameter $\eta$, and a lower bound for the normalized recombined coefficient difference
$(\bar m_{n,n-1}-\bar m_{n,n-2})/m_{n,n}$,
with $3\le n\le N$.
We then introduce the cross-row recombination technique and
prove the properties of the resulting coefficients, which will be used in Section~\ref{sec_4}.
Set $\rho_*=0.86$ and $\rho^*=1.16$.

\begin{lemma}\label{lm:eta_prop}
Assume that $\rho_j\in[\rho_*,\rho^*]$ for $2\le j\le N$. The parameters defined in \eqref{eta} satisfy
\begin{equation}\label{eq:eta_uniform_bound}
0<\eta_j\le\eta_*\coloneqq\frac{3+2\sqrt2}{2+4\sqrt2}<1,
\qquad 2\le j\le N.
\end{equation}
Moreover,
\begin{equation}\label{eq:eta_variation_bound}
|\eta_k-\eta_n|\le\frac{4\alpha}{25},
\qquad 3\le n\le k\le N.
\end{equation}
\end{lemma}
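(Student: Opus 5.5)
The plan is to reduce both claims to elementary estimates for an explicit function of $\alpha$ and at most two step ratios. By \eqref{matM}, for $k\ge3$ we have
\[
m_{k,k}=c_{k-1}^{(k)}+c_k^{(k)},\qquad m_{k,k-1}=d_{k-1}^{(k)}-a_k^{(k)}=c_{k-2}^{(k)}-a_{k-1}^{(k)}-a_k^{(k)}.
\]
Every coefficient in \eqref{eq:aj_cj} and \eqref{eq:a_k1} is $\tau^{-\alpha}$ times a dimensionless integral. The integrals for $c_{k-2}^{(k)}$, $a_{k-1}^{(k)}$ and $c_{k-1}^{(k)}$ only involve $t_k-t_{k-3},\dots,t_k$. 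After the substitution $s=(t_k-t_{j-1}-\theta\tau_j)/\tau_k$, the factor $\tau_k^{-\alpha}$ cancels in the quotient. Hence
\[
\eta_k=G(\alpha,\rho_{k-1},\rho_k)=\tfrac12\bigl(1-m_{k,k-1}/m_{k,k}\bigr),\qquad k\ge3,
\]
for an explicit function $G$, while $\eta_2$ is a different function $G_2(\alpha,\rho_2)$ of one ratio. This difference in formulas is why \eqref{eq:eta_variation_bound} is only asserted for $n\ge3$. I will use the closed-form evaluations of these integrals from \cite{quan2022h}: combinations of $(1+x)^{1-\alpha}$ and $(1+x)^{2-\alpha}$, divided by $(1-\alpha)(2-\alpha)$, with $x$ built from $1/\rho_k$ and $1/(\rho_k\rho_{k-1})$.

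For \eqref{eq:eta_uniform_bound}, positivity $\eta_k>0$ is equivalent to $m_{k,k-1}<m_{k,k}$. Since $m_{k,k}>0$ by Lemma~\ref{origin_matrix_row}(i), I will rewrite $m_{k,k}-m_{k,k-1}$ as one integral over $[t_{k-2},t_k]$ against the decreasing kernel $(t_k-s)^{-\alpha}$. Its weight integrates to zero, and it is positive near $s=t_k$. A second mean-value/kernel-monotonicity argument, or direct inspection of the closed form, then gives positivity on the box $[\rho_*,\rho^*]^2$.

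For the upper bound I will show that $\eta_k\le1-\frac{m_{k,k-1}}{2m_{k,k}}$ is maximized at an extreme configuration. The natural candidate is the limit $\alpha\to1$ at an endpoint ratio. There, $c_k^{(k)}$ and $a_k^{(k)}$ blow up like $(1-\alpha)^{-1}$, and the quotient tends to an algebraic expression in $\rho_k$. Optimizing that expression should produce the surd $\frac{3+2\sqrt2}{2+4\sqrt2}$. So the concrete step is: show $\partial_\alpha G\ge0$, or at least $G(\alpha,\cdot)\le\lim_{\alpha\to1}G$, and then maximize the limit rational function in $\rho$ over all $\rho>0$. This maximization yields the uniform constant $\eta_*<1$.

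For \eqref{eq:eta_variation_bound}, the key observation is that at $\alpha=0$ the kernel is constant. The L2 operator then telescopes to $\delta$ of the interpolant, so $m_{k,j}$ is the same for all $j$ and $G(0,\cdot,\cdot)\equiv0$. I will write
\[
\eta_k-\eta_n=\int_0^\alpha\bigl(\partial_\beta G(\beta,\rho_{k-1},\rho_k)-\partial_\beta G(\beta,\rho_{n-1},\rho_n)\bigr)\,d\beta .
\]
It then suffices to bound the oscillation of $\partial_\beta G$ over $\rho\in[\rho_*,\rho^*]^2$ by $4/25$ uniformly in $\beta\in(0,1)$. Equivalently, via the mean value theorem in $\rho$, it suffices to bound $\sup|\nabla_\rho\partial_\beta G|\cdot(\rho^*-\rho_*)\sqrt2\le 4/25$. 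An alternative that is perhaps cleaner is to show $G(\alpha,\rho)/\alpha$ lies in an interval of length $4/25$ for all $\alpha$ and all $\rho$ in the box.

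This last step is the main obstacle. The closed forms are messy, and the bound must be uniform as $\alpha\to1$, where individual terms are singular and only their ratio is controlled. I would first factor out the common $(1-\alpha)^{-1}(2-\alpha)^{-1}$ so that numerator and denominator are smooth in $\alpha\in[0,1]$. Next I would expand $(1+x)^{1-\alpha}$ to get explicit derivative bounds in $\rho$. Finally I would verify the resulting one-variable inequalities on the compact box, with computer-assisted interval checks relegated to the appendix if needed. The narrow window $[0.86,1.16]$ is precisely what keeps this oscillation below $4\alpha/25$.
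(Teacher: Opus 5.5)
Your reduction of $\eta_k$ ($k\ge3$) to a function $G(\alpha,\rho_{k-1},\rho_k)$ with $G(0,\cdot,\cdot)=0$ is correct. Your ``cleaner'' alternative for \eqref{eq:eta_variation_bound} is essentially the paper's route. However, the two steps you propose for \eqref{eq:eta_uniform_bound} do not work as stated.

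\textbf{Upper bound.} The $\alpha\to1$ limit is explicit and the same for every $k\ge2$. Only $c_k^{(k)}$ and $a_k^{(k)}$ blow up, and their ratio gives $\eta_k\to(1+\rho_k)^2/[2(1+2\rho_k)]$. This function is increasing and unbounded in $\rho_k$, so maximizing it over all $\rho>0$ gives $+\infty$, not $\eta_*$. In fact $\eta_*=\frac{3+2\sqrt2}{2+4\sqrt2}$ is simply its value at $\rho_k=\sqrt2$; on $[\rho_*,\rho^*]$ the limit is at most $1458/2075$. Moreover, passing from $\alpha=1$ to all $\alpha$ needs $G(\alpha,\cdot)\le G(1,\cdot)$. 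You leave this unproved, and it is as hard as the lemma itself.

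\textbf{Positivity.} The weight representing $m_{k,k}-m_{k,k-1}$ is supported on $[t_{k-3},t_k]$, not $[t_{k-2},t_k]$. The reason is that $\delta_{k-1}e$ also enters $H_2^{k-2}$ on $[t_{k-3},t_{k-2}]$; this is the term $c_{k-2}^{(k)}$ in $m_{k,k-1}$. The weight has sign pattern $+,-,+$, so the zero-mean, single-crossing argument against the increasing kernel $(t_k-s)^{-\alpha}$ does not apply. The piece on $[t_{k-2},t_k]$ is positive, but you must show it exceeds $c_{k-2}^{(k)}>0$. That is a quantitative estimate of order $\alpha$, not a sign argument.

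\textbf{How the paper avoids both issues.} It uses a single two-sided band: for $j\ge3$,
\[
\alpha\Bigl(c(\alpha)-\tfrac{2}{25}\Bigr)\le\eta_j\le\alpha\Bigl(c(\alpha)+\tfrac{2}{25}\Bigr),
\qquad c(\alpha)\coloneqq1-\tfrac{\alpha}{3}-\tfrac{\alpha(1-\alpha)}{10}.
\]
The interval for $G/\alpha$ must depend on $\alpha$. Indeed, $\eta_k/\alpha$ is about $0.98$--$0.99$ as $\alpha\to0$, while $\eta_k$ lies between $0.63$ and $0.71$ at $\alpha=1$, so no fixed interval of length $4/25$ works.

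The band is obtained as follows. The power factor in \eqref{App-eq:sm_r1} is replaced by cubic and quadratic Taylor bounds composed with the rational logarithm bounds $L_\pm$. The resulting rational inequalities are then certified via Bernstein coefficients.

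This one band yields all three claims at once:
\begin{itemize}
\item positivity, since $\eta_j\ge337\alpha/600$;
\item the uniform bound $\eta_j\le\alpha(27/25-\alpha/3)\le56/75<\eta_*$;
\item the variation bound, since the band width is $4\alpha/25$.
\end{itemize}
The row $j=2$ is handled by the explicit formula $\eta_2=\alpha(1+\rho_2)^2/[2(\alpha+2\rho_2)]\le1458/2075$. So you need neither monotonicity in $\alpha$ nor a separate positivity argument. You do, however, need to actually carry out the uniform certification that your proposal defers as ``the main obstacle''; as written, that central estimate is missing.
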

\begin{proof}
The bound \eqref{eq:eta_uniform_bound} follows from
\eqref{App-eq:eta-uniform-main} in the appendix.
For $j\ge3$, the lower and upper bounds in
\eqref{App-eq:eta-band-main} are independent of $j$ and differ by
$4\alpha/25$, which proves \eqref{eq:eta_variation_bound}.
\end{proof}

We next estimate the difference between the two coefficients
preceding the diagonal in a recombined row.
\begin{lemma}\label{feasible_reg_2}
Assume that $\rho_j\in[\rho_*,\rho^*]$ for $2\le j\le N$. Then the recombined coefficients defined in \eqref{def_bar_m} satisfy
\begin{equation}\label{eq:sigma_lower_bound}
\frac{\bar m_{n,n-1}-\bar m_{n,n-2}}{m_{n,n}}
\ge\frac{\alpha}{25},
\qquad 3\le n\le N.
\end{equation}
\end{lemma}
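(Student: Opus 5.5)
From \eqref{def_bar_m} with $j=n-1$ and $j=n-2$,
\[
\bar m_{n,n-1}=m_{n,n-1}+\eta_n m_{n,n},\qquad
\bar m_{n,n-2}=m_{n,n-2}+\eta_n m_{n,n-1}+\eta_n^2 m_{n,n}.
\]
By \eqref{eta}, $m_{n,n-1}=(1-2\eta_n)m_{n,n}$. Substituting this, the quantity to bound is
\[
\frac{\bar m_{n,n-1}-\bar m_{n,n-2}}{m_{n,n}}
=(1-\eta_n)^2-\frac{m_{n,n-2}}{m_{n,n}}.
\]
So the lemma reduces to the scalar inequality
\[
\frac{m_{n,n-2}}{m_{n,n}}\le(1-\eta_n)^2-\frac{\alpha}{25}.
\]
Here $m_{n,n}=c_{n-1}^{(n)}+c_n^{(n)}$. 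For $n\ge4$ we have $m_{n,n-2}=d_{n-2}^{(n)}$, and for $n=3$ we have $m_{3,1}=-a_1^{(3)}$.

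I would treat both sides as explicit functions of $\alpha$ and the local ratios $\rho_{n-1},\rho_n$ (and $\rho_{n-2}$ when $n\ge4$). After scaling by $\tau_n^{\alpha}$, every coefficient depends only on these ratios, because the integrands in \eqref{eq:aj_cj} involve only $\tau_{n-2},\tau_{n-1},\tau_n$ and distances to $t_n$. The closed forms in \cite{quan2022h} (for example \eqref{eq:a_k1}) give $c_n^{(n)}$ and $a_n^{(n)}$ exactly. The remaining coefficients $c_{n-1}^{(n)}$, $a_{n-1}^{(n)}$, $c_{n-2}^{(n)}$, $a_{n-2}^{(n)}$ are integrals of $(s+\theta\,\cdot)^{-\alpha}$-type kernels and have explicit elementary expressions in powers $(1+\rho)^{1-\alpha}$, $(1+\rho)^{2-\alpha}$, and so on.

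For $\eta_n$ I would use the two-sided band \eqref{App-eq:eta-band-main} from the appendix (the same estimate used in Lemma~\ref{lm:eta_prop}). Its upper bound $\eta_n\le\bar\eta(\alpha)$ yields the lower bound $(1-\eta_n)^2\ge(1-\bar\eta(\alpha))^2$. For the other side I would bound $m_{n,n-2}/m_{n,n}$ from above uniformly over the box $\rho\in[\rho_*,\rho^*]^3$. The natural approach is monotonicity of each coefficient in each ratio, which fixes the worst case at corners of the box: $m_{n,n}$ is smallest when $\rho_n$ is smallest, and $m_{n,n-2}$ is largest when the earlier steps are relatively large. This leaves a one-variable inequality in $\alpha\in(0,1)$ for each extremal corner.

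Two structural checks guide the estimates. As $\alpha\to0$ all kernels become constants. The L2 operator then reduces to $u^n-u^0$ (the Caputo derivative of order $0$), so $m_{n,j}$ becomes equal across $j$ and $\eta_n\to0$; the difference then tends to $1-1=0$, which explains why the margin $\alpha/25$ must vanish linearly in $\alpha$. For that reason I would prove the one-variable inequality in the form $g(\alpha)\ge\alpha/25$ with $g(0)=0$, by showing $g'(\alpha)\ge 1/25$ or by bounding $g(\alpha)/\alpha$ from below via Taylor expansion of $x^{-\alpha}$-type terms with explicit remainders. The case $n=3$, where $m_{3,1}=-a_1^{(3)}$ and $\rho_2$ enters through the first interval, would be handled separately but by the same corner analysis.

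The main obstacle is making the ratio estimate uniform in $\alpha$ and in the step ratios while keeping the linear-in-$\alpha$ margin. The expressions are differences of nearly cancelling terms of the form $(1+\rho)^{2-\alpha}-\rho^{2-\alpha}$. Crude bounds lose the $O(\alpha)$ margin, and establishing monotonicity in each $\rho$ is delicate. I would first try expanding each term as $\int$-representations with kernel $(t_n-s)^{-\alpha}=1-\alpha\log(t_n-s)+\dots$, normalized by $\tau_n$. This extracts the $O(\alpha)$ leading coefficient explicitly as a function of the ratios, and its minimum over the box should exceed $1/25$ with room to absorb the remainder. If analytic control proves unwieldy, I would fall back on a rigorous certified grid or interval-arithmetic verification over $(\alpha,\rho)\in(0,1)\times[\rho_*,\rho^*]^3$, as the appendix presumably does for \eqref{App-eq:eta-band-main}.
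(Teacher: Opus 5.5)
Your reduction is correct. Using $m_{n,n-1}=(1-2\eta_n)m_{n,n}$ gives $\sigma_n=(1-\eta_n)^2-r_2^{(n)}$ with $r_2^{(n)}=m_{n,n-2}/m_{n,n}$, which is the paper's starting point $4\sigma_n=(1+r_1^{(n)})^2-4r_2^{(n)}$.

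\textbf{The gap.} The next step decouples the two terms, and that fails. You replace $\eta_n$ by the uniform upper end $\bar\eta(\alpha)$ of \eqref{App-eq:eta-band-main}, and separately bound $r_2^{(n)}$ by its maximum over the box. The band is far too coarse for this:
\begin{itemize}
\item It has width $4\alpha/25$, and $\bar\eta(\alpha)=\frac{27}{25}\alpha+O(\alpha^2)$.
\item The actual $\eta_n$ is much closer to $\alpha$. Expanding the kernels to first order in $\alpha$ at the corner $\rho_n=\rho_{n-1}=\rho_{n-2}=1.16$ gives $\eta_n\approx0.994\,\alpha$ and $r_2^{(n)}\approx1-2.12\,\alpha$.
\item So the true value there is $\sigma_n\approx0.13\,\alpha$. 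Your reduced inequality, however, demands $1-2.12\,\alpha\le(1-\bar\eta)^2-\alpha/25=1-2.2\,\alpha+O(\alpha^2)$, which is false for small $\alpha$.
\item In fact $(1-\bar\eta)^2-r_2^{(n)}\approx-0.04\,\alpha<0$ there, so the decoupled bound does not even give positivity.
\end{itemize}
Replacing $\eta_n$ by $\bar\eta$ costs about $2(\bar\eta-\eta_n)\approx0.17\,\alpha$, which exceeds the available slack. Even with the exact extreme values of $\eta_n$ and $r_2^{(n)}$ over the box, the first-order margin would be at most about $0.075\,\alpha$. That would still need proof for every $\alpha\in(0,1)$. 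Your corner and monotonicity claims for $m_{n,n}$ and $m_{n,n-2}$ are also asserted without proof.

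\textbf{What the paper does instead.} It keeps $r_1^{(n)}$ and $r_2^{(n)}$ coupled at the same $\boldsymbol\rho$ by completing the square:
$4\sigma_n=(r_1^{(n)}-1+\alpha)^2+F(\alpha,\boldsymbol\rho)$, with $F=(4-2\alpha)r_1^{(n)}-4r_2^{(n)}+2\alpha-\alpha^2$.
It drops the square and controls $F$ pointwise in $\boldsymbol\rho$:
\begin{itemize}
\item $F(0,\boldsymbol\rho)=0$.
\item As $\alpha\to1$, one has $r_2^{(n)}\to0$ and $r_1^{(n)}\to-\rho_n^2/(1+2\rho_n)$. Hence $F(1,\boldsymbol\rho)=(-2\rho_n^2+2\rho_n+1)/(2\rho_n+1)>4/25$.
\item $F$ is concave in $\alpha$, which is certified through rational bounds for the logarithms and exponentials plus Bernstein coefficients.
\end{itemize}
Concavity then gives $F\ge4\alpha/25$ for all $\alpha$ at once. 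This also builds in the linear vanishing at $\alpha=0$ that you correctly anticipated.

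For $n=3$ the paper again avoids the uniform band. It uses pointwise-in-$\boldsymbol\rho$ Taylor bounds $\eta_3^{2,+}$ and $r_{2,\mathrm{up}}^{(3),\pm}$ and certifies the resulting rational expression directly.

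Your fallback of a certified numerical verification could only succeed if applied to the coupled quantity $\sigma_n/\alpha-1/25$, jointly in $(\alpha,\boldsymbol\rho)$, rather than to the decoupled inequality.
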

\begin{proof}
For $3\le n\le N$, set
\begin{equation}\label{eq:sigma_definition}
\sigma_n\coloneqq
\frac{\bar m_{n,n-1}-\bar m_{n,n-2}}{m_{n,n}}.
\end{equation}
Define the normalized coefficients
\begin{align}
r_1^{(n)}(\alpha)&\coloneqq\frac{m_{n,n-1}}{m_{n,n}}
=1-2\eta_n,\label{eq:r1}\\
r_2^{(n)}(\alpha)&\coloneqq\frac{m_{n,n-2}}{m_{n,n}}.
\label{eq:r2}
\end{align}
For $n\ge4$, using \eqref{def_bar_m} and \eqref{eq:r1}, we obtain
\begin{equation}\label{eq:sigma_square_identity}
4\sigma_n=\bigl(r_1^{(n)}(\alpha)+1\bigr)^2-4r_2^{(n)}(\alpha)
=\bigl(r_1^{(n)}(\alpha)-1+\alpha\bigr)^2
+F(\alpha,\boldsymbol\rho),
\end{equation}
where, for $n\ge4$, $\boldsymbol\rho=(\rho_n,\rho_{n-1},\rho_{n-2})$ and
\begin{equation}\label{eq:F_definition}
F(\alpha,\boldsymbol\rho)
\coloneqq(4-2\alpha)r_1^{(n)}(\alpha)-4r_2^{(n)}(\alpha)
+2\alpha-\alpha^2.
\end{equation}
Using the continuous endpoint values of these ratios, we have
\begin{equation}\label{eq:F_endpoints}
F(0,\boldsymbol\rho)=0,
\qquad
F(1,\boldsymbol\rho)=\frac{-2\rho_n^2+2\rho_n+1}{2\rho_n+1}
\ge\frac{3011}{16700}>\frac4{25}.
\end{equation}
Since $F$ is concave in $\alpha$ by
Appendix~\ref{App-sec:concavity}, it follows that
\begin{equation}\label{eq:F_lower_bound}
F(\alpha,\boldsymbol\rho)
\ge(1-\alpha)F(0,\boldsymbol\rho)+\alpha F(1,\boldsymbol\rho)
\ge\frac{4\alpha}{25}.
\end{equation}
Thus $\sigma_n\ge\alpha/25$ for $n\ge4$.
For $n=3$, $4\sigma_3=(r_1^{(3)}-1+\alpha)^2+F_3$, where
$F_3\coloneqq(4-2\alpha)r_1^{(3)}-4r_2^{(3)}+2\alpha-\alpha^2$.
The bound $F_3\ge4\alpha/25$ is proved in
Appendix~\ref{App-sec:bernstein-certificates}.
Thus $\sigma_3\ge\alpha/25$.
\end{proof}

In \eqref{def_bar_m}, the coefficients at time level $t_n$
are recombined using $\eta_n$.
We now extend this construction to all preceding time levels
using the recombination parameter associated with the current
time level. We refer to this as the cross-row recombination technique.
Fix an arbitrary $k$ with $3\le k\le N$ and define
\begin{equation}\label{eq:cross_row_coefficients}
\bar m_{n,j}^{(k)}
\coloneqq
\sum_{\ell=j}^n m_{n,\ell}\eta_k^{\ell-j},
\qquad 1\le j\le n\le k.
\end{equation}
In particular,
$\bar m_{k,j}^{(k)}=\bar m_{k,j}$.
The following estimates provide the coefficient
bounds needed below.

\begin{lemma}\label{lem:cross_row_estimates}
Assume that $\rho_j\in[\rho_*,\rho^*]$ for $2\le j\le N$. Let $3\le k\le N$. Then, for every $2\le n\le k$, the coefficients defined in \eqref{eq:cross_row_coefficients} satisfy
\begin{subequations}\label{eq:cross_terminal_estimates}
\begin{align}
\frac{\bar m_{n,n-1}^{(k)}}{m_{n,n}}&\ge\frac15,
\label{eq:cross_terminal_positive}\\
\frac{\bar m_{n,n}^{(k)}-\bar m_{n,n-1}^{(k)}}{m_{n,n}}
&\ge\frac\alpha2,
\label{eq:cross_terminal_difference}\\
\frac{\bar m_{n,n}^{(k)}-\bar m_{n,n-1}^{(k)}}{m_{n,n}}
-\frac{m_{n-1,n-1}}{m_{n,n}}\eta_k^2
&\ge\frac\alpha{10}.
\label{eq:cross_terminal_gradient}
\end{align}
\end{subequations}
\end{lemma}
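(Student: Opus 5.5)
By \eqref{eq:cross_row_coefficients}, for every $n\ge2$ we have
\[
\bar m^{(k)}_{n,n}=m_{n,n},\qquad \bar m^{(k)}_{n,n-1}=m_{n,n-1}+\eta_k m_{n,n}.
\]
Dividing by $m_{n,n}$ and using \eqref{eq:r1}, i.e.\ $m_{n,n-1}/m_{n,n}=1-2\eta_n$, gives
\[
\frac{\bar m^{(k)}_{n,n-1}}{m_{n,n}}=1-2\eta_n+\eta_k,\qquad
\frac{\bar m^{(k)}_{n,n}-\bar m^{(k)}_{n,n-1}}{m_{n,n}}=2\eta_n-\eta_k .
\]
All three inequalities therefore become explicit statements about $\eta_n$, $\eta_k$ and the ratio $m_{n-1,n-1}/m_{n,n}$. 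I would write $\eta_k=\eta_n+(\eta_k-\eta_n)$ and handle the deviation with Lemma~\ref{lm:eta_prop}: for $3\le n\le k$ it gives $|\eta_k-\eta_n|\le 4\alpha/25$, and in all cases $0<\eta_j\le\eta_*$.

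\textbf{Reduction to bounds on $\eta_n$.} For $n\ge3$ the three estimates reduce to the following.
\begin{itemize}
\item For \eqref{eq:cross_terminal_positive} it suffices that $1-\eta_n-4\alpha/25\ge1/5$.
\item For \eqref{eq:cross_terminal_difference} it suffices that $\eta_n-4\alpha/25\ge\alpha/2$, i.e.\ $\eta_n\ge 33\alpha/50$.
\end{itemize}
So I need a two-sided band for $\eta_n$ in terms of $\alpha$. These are the $j$-independent bounds from the appendix (\eqref{App-eq:eta-band-main}) used in Lemma~\ref{lm:eta_prop}, and I would invoke that band directly. If it has the expected form, roughly $\eta_n\approx\alpha\cdot(\text{something near }1)$ up to $\rho$-dependence, then both inequalities follow by checking linear-in-$\alpha$ inequalities at the band endpoints. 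The upper bound $\eta_n\le\eta_*\approx0.68$ alone is not enough for the first inequality, because $1-0.68-0.16>0.2$ fails only marginally. So I will use the band's upper endpoint, which should be smaller than $\eta_*$ for $n\ge3$.

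\textbf{The cases $n=2$ and $n=3$.} When $n=2$ (with $k\ge3$), $\eta_2$ comes from the special first row, and Lemma~\ref{lm:eta_prop} does not control $|\eta_k-\eta_2|$. I would compute $\eta_2$ explicitly from \eqref{eq:a_k1} and $m_{2,1}=-a^{(2)}_1-a^{(2)}_2$ as a function of $(\alpha,\rho_2)$. I would then verify the three inequalities over $\eta_k$ ranging in the band valid for $k\ge3$, using monotonicity in $\eta_k$: the quantity $1-2\eta_2+\eta_k$ is increasing in $\eta_k$, while $2\eta_2-\eta_k$ is decreasing. This leaves a two-parameter check in $(\alpha,\rho_2)\in(0,1)\times[\rho_*,\rho^*]$. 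For $n=3$, any $n$-dependent corrections should be handled the same way, using the $n=3$ certificates from Appendix~\ref{App-sec:bernstein-certificates}.

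\textbf{Main obstacle: \eqref{eq:cross_terminal_gradient}.} For this inequality I need an upper bound on
\[
\frac{m_{n-1,n-1}}{m_{n,n}}=\frac{c^{(n-1)}_{n-2}+c^{(n-1)}_{n-1}}{c^{(n)}_{n-1}+c^{(n)}_n}.
\]
This ratio behaves like $(\tau_n/\tau_{n-1})^{\alpha}$ times a rational function of $\rho_{n-1},\rho_n$. Using the explicit formulas \eqref{eq:a_k1} and the corresponding explicit $c^{(n)}_{n-1}$ from \cite{quan2022h}, I would bound it by a function $R(\alpha,\rho_{n-1},\rho_n)\le\rho^{*\alpha}\cdot(\cdot)$. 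The target is then
\[
2\eta_n-\eta_k-R\,\eta_k^2\ge\alpha/10 .
\]
Since $\eta_k^2=O(\alpha^2)$ from the band, the $\eta_k^2$ term is of lower order for small $\alpha$. The delicate regime is $\alpha$ near $1$, where $\eta\approx0.6$ and $\eta_k^2R$ is comparable to $2\eta_n-\eta_k$. Here I would first try to exploit concavity or convexity in $\alpha$, as in the proof of Lemma~\ref{feasible_reg_2}, reducing the problem to the endpoints $\alpha=0$ (where both sides vanish) and $\alpha=1$ (an explicit rational inequality in $\boldsymbol\rho$ on the box). If concavity fails, I would fall back on a Bernstein-polynomial certificate on the compact parameter box, in the spirit of the appendix. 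I expect this step to carry most of the technical work.
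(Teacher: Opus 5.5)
\textbf{There is a genuine gap.} Your first step, the identities $\bar m^{(k)}_{n,n-1}/m_{n,n}=1-2\eta_n+\eta_k$ and $(\bar m^{(k)}_{n,n}-\bar m^{(k)}_{n,n-1})/m_{n,n}=2\eta_n-\eta_k$, is exactly the paper's. The way you then close \eqref{eq:cross_terminal_positive} and \eqref{eq:cross_terminal_difference} fails for $\alpha$ near $1$. Writing $\eta_k=\eta_n+(\eta_k-\eta_n)$ and using only the $\rho$-independent band \eqref{App-eq:eta-band-main} together with $|\eta_k-\eta_n|\le4\alpha/25$ discards too much.

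To see this, set $\alpha=1$. Then \eqref{App-eq:sm_r1} collapses to $r_1^{(j)}=-\rho_j^2/(1+2\rho_j)$, so $\eta_j=(1+\rho_j)^2/[2(1+2\rho_j)]$, which ranges over roughly $[0.636,0.703]$ for $\rho_j\in[0.86,1.16]$.
\begin{itemize}
\item Your sufficient condition $1-\eta_n-4\alpha/25\ge1/5$ requires $\eta_n\le0.64$, which is false for $\rho_n=1.16$.
\item Your condition $\eta_n\ge33\alpha/50$ requires $\eta_n\ge0.66$, which is false for $\rho_n=0.86$.
\end{itemize}
So these reductions fail even with the exact value of $\eta_n$. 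With the band endpoints ($\approx0.587$ and $\approx0.747$ at $\alpha=1$), your linear-in-$\alpha$ checks already break down for $\alpha$ above roughly $0.85$ and $0.72$, respectively.

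The true quantities are fine: $1-2(0.703)+0.636\approx0.231$ and $2(0.636)-0.703\approx0.569$. But their margins (about $0.03$ and $0.07$) are far smaller than the $0.16$ you give away.

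The same defect affects your other cases:
\begin{itemize}
\item For $n=2$, taking $\eta_k$ at the lower band endpoint gives $1-2\eta_2+\eta_k\approx0.181<1/5$ at $\alpha=1$, $\rho_2=1.16$.
\item For \eqref{eq:cross_terminal_gradient}, band values give $2(0.587)-0.747-0.86\,(0.747)^2<0$ at $\alpha=1$, even with the smallest admissible diagonal ratio ($0.86$ at $\alpha=1$).
\end{itemize}
Incidentally, $\eta_*\approx0.761$, not $0.68$. Also, no separate $n=3$ treatment is needed, since \eqref{App-eq:sm_r1} holds for all $n\ge3$; the $n=3$ certificates in the appendix concern $F_3$, not this lemma.

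\textbf{The missing idea} is to keep each $\eta_j$ tied to its own ratios $(\rho_{j-1},\rho_j)$ and to verify the inequalities jointly over all independent ratios. The paper does this in three steps.
\begin{itemize}
\item It bounds the power in \eqref{App-eq:sm_r1} by the weighted harmonic and arithmetic mean inequalities. This gives rational brackets $\eta_j^{1,-}\le\eta_j\le\eta_j^{1,+}$, see \eqref{App-eq:supp-eta-H-minus}--\eqref{App-eq:supp-eta-H-plus}, which are exact at $\alpha=0$ and $\alpha=1$.
\item It bounds $m_{n-1,n-1}/m_{n,n}$ from above by \eqref{App-eq:diagonal-ratio-upper-main}.
\item It certifies, via Bernstein coefficients on the box in $(\alpha,\rho_{n-1},\rho_n,\rho_{k-1},\rho_k)$, positivity of $1-2\eta_n^{1,+}+\eta_k^{1,-}-1/5$, of $(2\eta_n^{1,-}-\eta_k^{1,+})/\alpha-1/2$, and of the analogous gradient expression. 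For $n=2$ it uses the explicit $\eta_2$ in place of $\eta_n^{1,\pm}$.
\end{itemize}
Your Bernstein fallback for \eqref{eq:cross_terminal_gradient} is in this spirit. However, it must be used for all three inequalities and with $\rho$-dependent brackets, not the band. The concavity-in-$\alpha$ route of Lemma~\ref{feasible_reg_2} does not transfer directly, because $\eta_n$, $\eta_k$, and the diagonal ratio depend on different sets of ratios.
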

\begin{proof}
Since $m_{n,n-1}/m_{n,n}=1-2\eta_n$, we have
\begin{equation}\label{eq:cross_terminal_identities}
\frac{\bar m_{n,n-1}^{(k)}}{m_{n,n}}
=1-2\eta_n+\eta_k,
\qquad
\frac{\bar m_{n,n}^{(k)}-\bar m_{n,n-1}^{(k)}}{m_{n,n}}
=2\eta_n-\eta_k.
\end{equation}
Substituting these identities into
\eqref{App-eq:cross-L1}--\eqref{App-eq:cross-L4}, proved in
Appendix~\ref{App-sec:cross-row-lemma}, gives the assertions.
\end{proof}

We now establish the main result on the recombined L2 coefficients.
\begin{theorem}[Cross-row recombination]
\label{lem:cross_row_recombination}
Assume that the time mesh $0=t_0<t_1<\cdots<t_N=T$ satisfies
$\rho_j\in[0.86,1.16]$ for $j\ge2$.
For an arbitrary target level $3\le k\le N$, let
$\bar m_{n,j}^{(k)}$ be defined by \eqref{eq:cross_row_coefficients}.
Then, for every $2\le n\le k$, the following properties hold:
\begin{itemize}
\item[(i)] $\bar m_{n,j}^{(k)}>0$ for all $1\le j\le n$;
\item[(ii)] $\bar m_{n,j}^{(k)}\le\bar m_{n,j+1}^{(k)}$
for all $1\le j\le n-1$;
\item[(iii)] $\displaystyle
\sum_{j=1}^{n-1}\frac{\bar m_{n,j+1}^{(k)}-\bar m_{n,j}^{(k)}}{m_{n,n}}
=1-\frac{\bar m_{n,1}^{(k)}}{m_{n,n}}<1$;
\item[(iv)] $\displaystyle
\frac{\Gamma(1-\alpha)}{m_{n,n}}\eta_k^2
\le\frac{\bar m_{n,n}^{(k)}-\bar m_{n,n-1}^{(k)}}{m_{n,n}}
\frac{\Gamma(1-\alpha)}{m_{n-1,n-1}}$.
\end{itemize}
\end{theorem}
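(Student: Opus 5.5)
The plan is to reduce everything to the recursion satisfied by the cross-row coefficients. From \eqref{eq:cross_row_coefficients} we have
\begin{equation*}
\bar m_{n,j}^{(k)} = m_{n,j} + \eta_k \bar m_{n,j+1}^{(k)}, \qquad 1\le j\le n-1, \qquad \bar m_{n,n}^{(k)} = m_{n,n},
\end{equation*}
and therefore
\begin{equation*}
\bar m_{n,j+1}^{(k)}-\bar m_{n,j}^{(k)} = (1-\eta_k)\bar m_{n,j+1}^{(k)} - m_{n,j}.
\end{equation*}
Item (iii) is immediate: the sum telescopes to $(\bar m_{n,n}^{(k)}-\bar m_{n,1}^{(k)})/m_{n,n}$, and the inequality then follows from (i). Item (iv) is obtained by multiplying \eqref{eq:cross_terminal_gradient} by $\Gamma(1-\alpha)/m_{n-1,n-1}>0$. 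The inequality there has slack $\alpha/10\ge 0$, so (iv) is in fact weaker than what Lemma~\ref{lem:cross_row_estimates} gives. So the real work is in (i) and (ii).

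For (i), I would argue by backward induction in $j$, using $0<\eta_k<1$ from Lemma~\ref{lm:eta_prop}.
\begin{itemize}
\item At $j=n$: $\bar m_{n,n}^{(k)}=m_{n,n}>0$.
\item At $j=n-1$: positivity is \eqref{eq:cross_terminal_positive}. This is the only step where $m_{n,n-1}$ might be negative; it is the entry excluded in Lemma~\ref{origin_matrix_row}(i).
\item For $j\le n-2$: $m_{n,j}>0$ by Lemma~\ref{origin_matrix_row}(i), so $\bar m_{n,j}^{(k)}=m_{n,j}+\eta_k\bar m_{n,j+1}^{(k)}>0$.
\end{itemize}
The case $n=2$ is covered by the first two bullets.

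For (ii), the index $j=n-1$ is \eqref{eq:cross_terminal_difference}. For $j\le n-2$ I would use the identity
\begin{equation*}
\bar m_{n,j+1}^{(k)}-\bar m_{n,j}^{(k)} = (m_{n,j+1}-m_{n,j}) + \eta_k\bigl(\bar m_{n,j+2}^{(k)}-\bar m_{n,j+1}^{(k)}\bigr),
\end{equation*}
which follows by subtracting consecutive recursions. This gives a backward induction: if the difference at index $j+1$ is nonnegative and $m_{n,j}\le m_{n,j+1}$, then the difference at $j$ is nonnegative. The raw monotonicity $m_{n,j}\le m_{n,j+1}$ is available from Lemma~\ref{origin_matrix_row}(ii) for $j\le n-3$. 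To use it, I would check that \eqref{condition:rho} holds on $[0.86,1.16]$. This is easy: $1/(\rho^2(1+\rho))-3\le 1/(0.86^2\cdot1.86)-3<0<1/\rho_{j+1}$.

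The remaining index $j=n-2$ is where I expect the main obstacle. There $m_{n,n-2}\le m_{n,n-1}$ can fail, since $m_{n,n-1}$ contains the negative $-a_n^{(n)}$, so the induction must start from the pair $(n-2,n-1)$ directly. The required inequality is
\begin{equation*}
\bar m_{n,n-1}^{(k)}-\bar m_{n,n-2}^{(k)} = (1-\eta_k)\bar m_{n,n-1}^{(k)}-m_{n,n-2}\ge 0.
\end{equation*}
When $k=n$, this is exactly Lemma~\ref{feasible_reg_2}, which gives the lower bound $\alpha m_{n,n}/25$. For $k\ne n$, I would write it as a function of $\eta_k$ and compare with $\eta_n$:
\begin{equation*}
\frac{\bar m^{(k)}_{n,n-1}-\bar m^{(k)}_{n,n-2}}{m_{n,n}} = \sigma_n + g(\eta_k)-g(\eta_n), \qquad g(\eta) = (1-\eta)(1-2\eta_n+\eta).
\end{equation*}
The derivative $g'$ is bounded by a constant (using $\eta\le\eta_*$), and $|\eta_k-\eta_n|\le 4\alpha/25$ by \eqref{eq:eta_variation_bound}. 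I would then check that $\alpha/25$ beats this perturbation term. If the crude derivative bound is insufficient, I would use the finer bands for $\eta$ in the appendix (\eqref{App-eq:eta-band-main}) to bound $|g'|$ more sharply near the actual range of $\eta$. This is the step I expect to need care. For $n=2$, only $j=1=n-1$ occurs, so \eqref{eq:cross_terminal_difference} suffices, and the case $n=3$ uses the $\sigma_3$ bound from Lemma~\ref{feasible_reg_2}.
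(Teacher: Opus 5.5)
Your handling of (i), (iii), (iv), and of the indices $j=n-1$ and $j\le n-3$ in (ii) matches the paper's proof. The gap is at the step you flagged yourself: the penultimate difference when $k\neq n$. Neither of the two estimation strategies you propose there closes.

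Your decomposition $\sigma_n+g(\eta_k)-g(\eta_n)$ with $g(\eta)=(1-\eta)(1-2\eta_n+\eta)$ is correct. The trouble is bounding the correction by $\sup|g'|\cdot|\eta_k-\eta_n|$, since $g'(\eta)=2(\eta_n-\eta)$.
\begin{itemize}
\item A uniform bound from $\eta\le\eta_*$ gives $|g'|\le2\eta_*\approx1.52$. The perturbation is then about $0.24\alpha$, far above the available margin $\alpha/25$.
\item The refined version using the band \eqref{App-eq:eta-band-main} gives $|g'|\le2|\eta_k-\eta_n|\le8\alpha/25$ on the segment, so a perturbation of at most $32\alpha^2/625$. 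But $\alpha/25-32\alpha^2/625=\alpha(25-32\alpha)/625$ is negative for $\alpha>25/32$.
\end{itemize}
So with the constants from Lemmas~\ref{lm:eta_prop} and~\ref{feasible_reg_2}, neither route proves (ii) for all $\alpha\in(0,1)$. The sup-of-derivative bound loses a factor of $2$ because it ignores that $g'$ vanishes at $\eta_n$.

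The missing observation is that $g$ is a concave quadratic whose vertex sits exactly at $\eta_n$:
\[
g(\eta)=(1-\eta_n)^2-(\eta-\eta_n)^2 .
\]
The correction is therefore exactly $-(\eta_k-\eta_n)^2$, which gives
\[
\frac{\bar m_{n,n-1}^{(k)}-\bar m_{n,n-2}^{(k)}}{m_{n,n}}=\sigma_n-(\eta_k-\eta_n)^2\ge\frac{\alpha}{25}-\frac{16\alpha^2}{625}\ge\frac{9\alpha}{625}>0 .
\]
This is the paper's identity \eqref{eq:cross_penultimate_identity}. With this one-line computation in place of the derivative bound, your backward induction for (ii) goes through as you describe.
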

\begin{proof}
By definition, $\bar m_{n,n}^{(k)}=m_{n,n}$ and
\begin{equation}\label{eq:cross_row_backward}
\bar m_{n,j}^{(k)}=m_{n,j}+\eta_k\bar m_{n,j+1}^{(k)},
\qquad 1\le j\le n-1.
\end{equation}
Lemma~\ref{origin_matrix_row}(i) gives $m_{n,n}>0$ and
$m_{n,j}>0$ for $1\le j\le n-2$.
By \eqref{eq:cross_terminal_positive},
$\bar m_{n,n-1}^{(k)}>0$.
For $n\ge3$, applying \eqref{eq:cross_row_backward}
successively for $j=n-2,n-3,\ldots,1$, with $\eta_k>0$,
gives $\bar m_{n,j}^{(k)}>0$ for all these indices.
Together with $\bar m_{n,n}^{(k)}=m_{n,n}>0$,
this proves (i).

To prove (ii), the last difference is positive by
\eqref{eq:cross_terminal_difference}. For $n\ge3$, the preceding
difference satisfies
\begin{equation}\label{eq:cross_penultimate_identity}
\begin{aligned}
\frac{\bar m_{n,n-1}^{(k)}-\bar m_{n,n-2}^{(k)}}{m_{n,n}}
&=
\frac{\bar m_{n,n-1}-\bar m_{n,n-2}}{m_{n,n}}
-(\eta_k-\eta_n)^2\ge \frac{\alpha}{25}
-\frac{16\alpha^2}{625}
\ge \frac{9\alpha}{625}>0.
\end{aligned}
\end{equation}
The identity follows by expanding
\eqref{eq:cross_row_coefficients} and using $r_1^{(n)}=1-2\eta_n$;
the inequalities follow from Lemmas~\ref{lm:eta_prop} and
\ref{feasible_reg_2}. Subtracting adjacent instances of
\eqref{eq:cross_row_backward} gives
\begin{equation}\label{eq:cross_difference_recursion}
\begin{aligned}
\bar m_{n,j+1}^{(k)}-\bar m_{n,j}^{(k)}
={}&m_{n,j+1}-m_{n,j}
+\eta_k\bigl(\bar m_{n,j+2}^{(k)}-\bar m_{n,j+1}^{(k)}\bigr), \qquad1\le j\le n-3.
\end{aligned}
\end{equation}
The mesh condition \eqref{condition:rho} holds because
\(
\frac{1}{\rho_j^2(1+\rho_j)}-3
\le\frac{1}{(0.86)^2(1+0.86)}-3<0<\frac{1}{\rho_{j+1}}
\).
For $n\ge4$, Lemma~\ref{origin_matrix_row}(ii) gives
$m_{n,j+1}-m_{n,j}\ge0$ for $1\le j\le n-3$.
Since $\eta_k>0$, applying \eqref{eq:cross_difference_recursion}
successively for $j=n-3,n-4,\ldots,1$ yields
$\bar m_{n,j+1}^{(k)}-\bar m_{n,j}^{(k)}\ge0$
for all these indices.
This proves (ii), including $n=2$, for which there is
only the last difference.
Assertion (iii) follows by summing the differences and using
(i) and $\bar m_{n,n}^{(k)}=m_{n,n}$.
Finally, multiplying
\eqref{eq:cross_terminal_gradient} by
$\Gamma(1-\alpha)/m_{n-1,n-1}>0$ gives
\begin{equation}\label{eq:cross_gradient_margin}
\begin{aligned}
&\frac{\bar m_{n,n}^{(k)}-\bar m_{n,n-1}^{(k)}}{m_{n,n}}
\frac{\Gamma(1-\alpha)}{m_{n-1,n-1}}
-\frac{\Gamma(1-\alpha)}{m_{n,n}}\eta_k^2\ge\frac\alpha{10}\frac{\Gamma(1-\alpha)}{m_{n-1,n-1}}>0,
\end{aligned}
\end{equation}
which proves (iv).
\end{proof}

Taking $n=k$ in Theorem~\ref{lem:cross_row_recombination}
gives positivity and monotonicity of the coefficients $\bar m_{k,j}$
for $k\ge3$. The row $k=2$ follows directly from
\begin{equation}\label{eq:startup_own_row}
\bar m_{2,1}=(1-\eta_2)m_{2,2},\qquad
\bar m_{2,2}=m_{2,2}>0,
\end{equation}
and $0<\eta_2<1$ from Lemma~\ref{lm:eta_prop}.

The recombination parameter and the diagonal coefficients
also satisfy
\begin{equation}\label{eq:3.23_new}
m_{k,k}^{-1}\eta_k\le m_{k-1,k-1}^{-1},\qquad k\ge2.
\end{equation}
For $k\ge3$, setting $n=k$ in
\eqref{eq:cross_terminal_gradient} gives
$\eta_k-(m_{k-1,k-1}/m_{k,k})\eta_k^2\ge\alpha/10$ and then
division by $\eta_k>0$ proves \eqref{eq:3.23_new}.
For $k=2$, the coefficient definitions give
\begin{equation}
\label{eq:startup-original-coefficients}
\begin{aligned}
m_{2,1}&=-a_1^{(2)}-a_2^{(2)}
=\frac{t_2^{-\alpha}}{(2-\alpha)(1-\alpha)}
  [2(1-\alpha)-\alpha\rho_2],\\
m_{2,2}&=c_1^{(2)}+c_2^{(2)}
=\frac{t_2^{-\alpha}}{(2-\alpha)(1-\alpha)}
  \frac{\alpha+2\rho_2}{\rho_2}.
\end{aligned}
\end{equation}
Then \eqref{matM} and
\eqref{eq:startup-original-coefficients} give
\[
\frac{m_{1,1}}{m_{2,2}}
=\frac{(2-\alpha)\rho_2(1+\rho_2)^\alpha}{\alpha+2\rho_2}.
\]
For fixed $\rho_2\in[0.86,1.16]$, the logarithmic derivative satisfies
\[
\begin{aligned}
\frac{\mathrm d}{\mathrm d\alpha}
\log\left(\frac{m_{1,1}}{m_{2,2}}\right)
&=\log(1+\rho_2)-\frac{1}{2-\alpha}
-\frac{1}{\alpha+2\rho_2}\\
&\leq\log(1+\rho_2)-\frac{2}{1+\rho_2}
\leq\log(2.16)-\frac{2}{2.16}<0.
\end{aligned}
\]
Since the ratio tends to $1$ as $\alpha\to0^+$, we obtain
\begin{equation}
\label{eq:startup-diagonal-comparison}
\frac{m_{1,1}}{m_{2,2}}<1,\qquad 0<\alpha<1.
\end{equation}
Since $0<\eta_2<1$ by Lemma~\ref{lm:eta_prop}, we prove \eqref{eq:3.23_new} also for $k=2$.

Equipped with these structural properties, we now establish the global
$L^2$-norm convergence of the scheme.

\section{$L^2$-norm convergence}\label{sec_4}
We then provide some lower bounds of the recombined L2 coefficients.
\begin{lemma}\label{mkk&mk1}
Assume that the time mesh $0 = t_0 < t_1 < \cdots < t_N =T$ satisfies $\rho_k \in [0.86, 1.16],~k\geq2$, the following properties for the recombined L2 coefficients hold:
\begin{itemize}

\item [(i)] $\frac{1}{\Gamma(1-\alpha)} \bar m_{2,1} \geq  \frac{1}{2\Gamma(2-\alpha)} 
t_2^{-\alpha} $;

\item [(ii)] $\bar m_{k,1}\geq m_{k,1}\geq t_k^{-\alpha}$ for all $k \geq 3$;

\item [(iii)] $\frac{1}{\Gamma(1-\alpha)}m_{k,k} \geq \frac{1}{\Gamma(2-\alpha)}\tau_k^{-\alpha}$ for all $k \geq 2$.
\end{itemize}
\end{lemma}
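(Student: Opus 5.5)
The three bounds follow from the explicit coefficient formulas of Section~\ref{sec_2}, together with the positivity already established in Theorem~\ref{lem:cross_row_recombination} and Lemma~\ref{origin_matrix_row}. Throughout, $\Gamma(2-\alpha)=(1-\alpha)\Gamma(1-\alpha)$.

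\emph{Part (iii).} For $k\ge2$ we have $m_{k,k}=c_{k-1}^{(k)}+c_k^{(k)}$, and $c_{k-1}^{(k)}>0$ by the sign properties from \cite{quan2022h}. In the explicit formula \eqref{eq:a_k1} for $c_k^{(k)}$ the second summand is nonnegative, so
\[
m_{k,k}\ge \frac{1}{(1-\alpha)\tau_k^{\alpha}} .
\]
Dividing by $\Gamma(1-\alpha)$ gives (iii).

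\emph{Part (ii).} Let $k\ge3$.
\begin{itemize}
\item[1.] Write $\bar m_{k,1}=m_{k,1}+\eta_k\bar m_{k,2}$, which is \eqref{eq:cross_row_backward} with $n=k$. By Theorem~\ref{lem:cross_row_recombination}(i), $\bar m_{k,2}>0$, and $\eta_k>0$ by Lemma~\ref{lm:eta_prop}. Hence $\bar m_{k,1}\ge m_{k,1}$.
\item[2.] For $k\ge3$ the first column of \eqref{matM} contains only $-a_1^{(k)}$. Therefore
\[
m_{k,1}=\int_0^1\frac{2\tau_1(1-\theta)+\tau_2}{(\tau_1+\tau_2)(t_k-\theta\tau_1)^{\alpha}}\,\dtheta .
\]
\item[3.] Since $0<t_k-\theta\tau_1\le t_k$, we have $(t_k-\theta\tau_1)^{-\alpha}\ge t_k^{-\alpha}$.
\item[4.] The remaining weight integrates exactly to one: $\int_0^1\bigl(2\tau_1(1-\theta)+\tau_2\bigr)\,\dtheta=\tau_1+\tau_2$. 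This gives $m_{k,1}\ge t_k^{-\alpha}$.
\end{itemize}

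\emph{Part (i).} This is the only place the step-ratio bound enters, and only mildly.
\begin{itemize}
\item[1.] Start from $\bar m_{2,1}=(1-\eta_2)m_{2,2}$ in \eqref{eq:startup_own_row}. Insert $\eta_2=\tfrac12(1-m_{2,1}/m_{2,2})$ to get $\bar m_{2,1}=\tfrac12(m_{2,1}+m_{2,2})$.
\item[2.] Using \eqref{eq:startup-original-coefficients},
\[
m_{2,1}+m_{2,2}=\frac{t_2^{-\alpha}}{(2-\alpha)(1-\alpha)}\Bigl(4-2\alpha-\alpha\rho_2+\frac{\alpha}{\rho_2}\Bigr).
\]
\item[3.] The claim is equivalent to $m_{2,1}+m_{2,2}\ge t_2^{-\alpha}/(1-\alpha)$, that is, to the bracket being at least $2-\alpha$. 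This reduces to
\[
\alpha\bigl(1+\rho_2-\rho_2^{-1}\bigr)\le 2 .
\]
\item[4.] The function $\rho\mapsto 1+\rho-\rho^{-1}$ is increasing. At $\rho_2=1.16$ it is below $1.31$. Since $\alpha<1$, the inequality holds with room to spare.
\end{itemize}

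The main obstacle is therefore only the bookkeeping in step 2 of part (i): the explicit startup formulas have to be combined correctly. Everything else is direct monotonicity of the Caputo kernel and the positivity results already proved.
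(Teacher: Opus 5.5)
Your proposal is correct and follows the paper's proof essentially line by line. Part (iii) uses $c_{k-1}^{(k)}>0$ together with the explicit $c_k^{(k)}$, and part (ii) uses the backward recursion together with $(t_k-\theta\tau_1)^{-\alpha}\ge t_k^{-\alpha}$ and the fact that the weights integrate to one. In part (i), your reduction to $\alpha(1+\rho_2-\rho_2^{-1})\le 2$ is just a rearrangement of the paper's check via $|\rho_2^{-1}-\rho_2|<1$ and $\alpha/(2-\alpha)<1$.

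One small correction to your commentary: the step-ratio bound also enters (ii), through Theorem~\ref{lem:cross_row_recombination}(i) and Lemma~\ref{lm:eta_prop}, so (i) is not the only place it is used.
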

\begin{proof}
(i) By \eqref{eq:startup-original-coefficients} and \eqref{eta},
\[
\frac{\bar m_{2,1}}{\Gamma(1-\alpha)}
=\frac{1}{\Gamma(2-\alpha)t_2^\alpha}
\left[1+\frac{\alpha}{2(2-\alpha)}
\left(\rho_2^{-1}-\rho_2\right)\right]
\ge\frac{t_2^{-\alpha}}{2\Gamma(2-\alpha)},
\]
since $|\rho_2^{-1}-\rho_2|<1$ and $0<\alpha/(2-\alpha)<1$.


(ii) For $k\ge3$, the integral formula \eqref{eq:aj_cj} gives
\begin{equation}\label{eq:mk1_expansion}
\begin{aligned}
m_{k,1}
&=\int_0^1\frac{2\tau_1(1-\theta)+\tau_2}
{(\tau_1+\tau_2)(t_k-\theta\tau_1)^\alpha}\,\mathrm d\theta\ge t_k^{-\alpha}
\int_0^1\frac{2\tau_1(1-\theta)+\tau_2}{\tau_1+\tau_2}\,
\mathrm d\theta
=t_k^{-\alpha}.
\end{aligned}
\end{equation}
Theorem~\ref{lem:cross_row_recombination}(i) and
\eqref{def_bar_m} then yield
$\bar m_{k,1}=m_{k,1}+\eta_k\bar m_{k,2}\ge m_{k,1}$.

(iii) Since $c_{k-1}^{(k)}>0$, \eqref{eq:a_k1} implies
\[
m_{k,k}=c_{k-1}^{(k)}+c_k^{(k)}
\ge c_k^{(k)}\ge\frac{\tau_k^{-\alpha}}{1-\alpha},
\]
which proves the assertion after division by $\Gamma(1-\alpha)$.

\end{proof}

To establish the optimal $L^2$-norm error estimate, we state the following upper bound for the truncation error $r^k$ defined in \eqref{eq:localrk}.

\begin{lemma}\label{lem:truncation}
Assume that $u\in C^3((0,T];H_0^1(\Omega))$ is the solution to subdiffusion equation \eqref{model_subdiffusion} and satisfies $\|\partial_t^{l} u(t)\|_0\le C_l(1+t^{\alpha-l})$ for $l=1,3$, $0< t\le T$. Assume that the nonuniform mesh satisfies
$\rho_k\in[\rho_{\min},\rho_{\max}]$ for $2\le k\le N$,
where $0<\rho_{\min}<\rho_{\max}<\infty$.
Then the truncation error $r^k$ defined in
\eqref{eq:localrk} satisfies $\|r^1\|_0\le C_r$ and,
for $2\le k\le N$,
\begin{equation}\label{eq:approxerror}
\begin{aligned}
\|r^k\|_0
&\le C_r\Biggl[
\tau_1^\alpha(t_k-t_1)^{-\alpha}
+\sum_{j=2}^{k-1}
\tau_j^4(1+t_{j-1}^{\alpha-3})(t_k-t_j)^{-\alpha-1}
+\tau_k^{3-\alpha}(1+t_{k-1}^{\alpha-3})
\Biggr],
\end{aligned}
\end{equation}
where $C_r>0$ depends only on $T$, $\alpha$,
$\rho_{\min}$, $\rho_{\max}$, $C_1$, and $C_3$,
and is independent of $k$, $N$, and the time steps.
\end{lemma}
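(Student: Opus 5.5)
We split the truncation error \eqref{eq:localrk} according to the subintervals and integrate by parts on each one. Write $\psi:=u-I_2u$. For $k=1$, $\psi$ vanishes at $t_0,t_1$. Integrating by parts (or bounding directly) gives
\[
r^1=\frac{1}{\Gamma(1-\alpha)}\int_0^{t_1}(t_1-s)^{-\alpha}\partial_s\psi\,\ds .
\]
Using $\|u'(s)\|_0\le C(1+s^{\alpha-1})$ and $\|\partial_s I_2u\|_0=\|\delta_1u\|_0/\tau_1\le C\tau_1^{\alpha-1}$, a Beta-integral computation gives $\|r^1\|_0\le C$.

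For $k\ge2$ we write $r^k=\sum_{j=1}^{k}r^k_j$, where $r^k_j$ is the integral over $(t_{j-1},t_j)$. Since $\psi(t_{j-1})=\psi(t_j)=0$ for every $j$, integration by parts on interior intervals $j\le k-1$ gives
\[
r^k_j=\frac{-\alpha}{\Gamma(1-\alpha)}\int_{t_{j-1}}^{t_j}(t_k-s)^{-\alpha-1}\psi(s)\,\ds .
\]
The three groups of terms are then handled as follows.

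\textbf{The first interval, $j=1$.} Here $\psi$ is the error of the quadratic interpolant $H_2^1$ built on $t_0,t_1,t_2$. Bound $\|\psi(s)\|_0\le \int_0^{t_2}\|u'\|_0+C\max_{\ell\le 2}\|u(t_\ell)-u(0)\|_0\le Ct_2^\alpha\le C\tau_1^\alpha$, using $\tau_2\le\rho_{\max}\tau_1$. Then $\int_0^{t_1}(t_k-s)^{-\alpha-1}\,\ds\le \alpha^{-1}(t_k-t_1)^{-\alpha}$, which yields the term $\tau_1^\alpha(t_k-t_1)^{-\alpha}$.

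\textbf{Interior intervals, $2\le j\le k-1$.} $\psi$ is the interpolation error of $H_2^j$ on $[t_{j-1},t_{j+1}]$, and all these points lie in $[t_{j-1},T]$ away from $0$. The standard quadratic-interpolation remainder gives
\[
\|\psi(s)\|_0\le C(\tau_j+\tau_{j+1})^3\max_{[t_{j-1},t_{j+1}]}\|u'''\|_0\le C\tau_j^3(1+t_{j-1}^{\alpha-3}),
\]
by the bounded step ratios and monotonicity of $t^{\alpha-3}$. Multiplying by $\tau_j$ and using $(t_k-s)^{-\alpha-1}\le (t_k-t_j)^{-\alpha-1}$ yields $\tau_j^4(1+t_{j-1}^{\alpha-3})(t_k-t_j)^{-\alpha-1}$. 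For $j=k-1$ this is valid, since $t_k-t_{k-1}=\tau_k>0$.

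\textbf{The last interval, $j=k$.} Here $I_2u=H_2^{k-1}u$ extrapolates onto $[t_{k-1},t_k]$. Integrating by parts produces a singular boundary term at $s=t_k$, so we avoid that. Instead we bound $\|\partial_s\psi\|_0\le C\tau_k^2(1+t_{k-2}^{\alpha-3})$ via the derivative form of the interpolation remainder on $[t_{k-2},t_k]$. Then $\int_{t_{k-1}}^{t_k}(t_k-s)^{-\alpha}\,\ds=\tau_k^{1-\alpha}/(1-\alpha)$ gives $\tau_k^{3-\alpha}(1+t_{k-2}^{\alpha-3})$.

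\textbf{Main obstacle.} This is the case $k=2$ (and the appearance of $t_{k-2}$ rather than $t_{k-1}$), where the interval $[t_0,t_2]$ touches the singularity. For $k\ge3$ we use $t_{k-2}\ge c\,t_{k-1}$ by the ratio bounds: $t_{k-1}=t_{k-2}+\tau_{k-1}$ and $\tau_{k-1}\le\rho_{\max}\tau_{k-2}\le\rho_{\max}t_{k-2}$. For $k=2$ we instead bound crudely: $\|\partial_s\psi\|_0\le C\tau_1^{\alpha-1}$ from $\|u'\|_0$ and the interpolant slopes. This gives $\|r^2_2\|_0\le C\tau_1^{\alpha-1}\tau_2^{1-\alpha}\le C$. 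That quantity is dominated by $\tau_2^{3-\alpha}t_1^{\alpha-3}\sim 1$, so the claimed form holds there as well.
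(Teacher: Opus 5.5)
Your proof is correct, but it takes a different route from the paper's.

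\textbf{The paper's route.} The paper does no interval-by-interval analysis. It fixes $v\in L^2(\Omega)$ with $\|v\|_0=1$ and notes that the scalar function $t\mapsto(u(t),v)$ satisfies $\bigl|\frac{\mathrm{d}^l}{\mathrm{d}t^l}(u(t),v)\bigr|\le C_l(1+t^{\alpha-l})$, with constants independent of $v$. By linearity, the L2 truncation error of this scalar function is $(r^k,v)$. The paper then invokes the scalar estimate \cite[Lemma~5.1]{quan2022h} and takes the supremum over $v$.

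\textbf{Your route.} You instead reprove that scalar lemma directly for the $L^2(\Omega)$-valued function, in these steps:
\begin{itemize}
\item integration by parts on the pieces where $\psi$ vanishes at both endpoints;
\item the $O(\tau_1^\alpha)$ bound on the first interval, which produces $\tau_1^\alpha(t_k-t_1)^{-\alpha}$;
\item the cubic interpolation remainder on the interior intervals;
\item the derivative remainder on $[t_{k-1},t_k]$, followed by $t_{k-2}\ge t_{k-1}/(1+\rho_{\max})$;
\item the separate row $k=2$, closed by $\tau_2^{3-\alpha}t_1^{\alpha-3}=\rho_2^{3-\alpha}\ge\rho_{\min}^{3-\alpha}$.
\end{itemize}
All of these steps check out.

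\textbf{Comparison.} The paper's argument is shorter, and its only new ingredient is the uniformity in $v$. Yours is self-contained and shows where each term of \eqref{eq:approxerror} comes from, including the $t_{k-2}$ versus $t_{k-1}$ issue. It also shows how $C_r$ depends on $\rho_{\min}$ and $\rho_{\max}$.

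\textbf{One point to tighten.} The Lagrange mean-value form of the remainder, with a single intermediate point, is not valid for vector-valued functions. So the norm bounds $\|\psi(s)\|_0\le C\tau_j^3\max\|\partial_t^3u\|_0$ and $\|\partial_s\psi\|_0\le C\tau_k^2\max\|\partial_t^3u\|_0$ need justification. You can get them either from the Peano-kernel representation or by testing against $v$, which is exactly the paper's duality step.

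\textbf{Wording slips.} On $[t_{k-1},t_k]$ the polynomial $H_2^{k-1}u$ interpolates rather than extrapolates, since its nodes are $t_{k-2},t_{k-1},t_k$. Also, integrating by parts there would not create a singular boundary term, because $\psi(t_k)=0$ gives $(t_k-s)^{-\alpha}\psi(s)=O\bigl((t_k-s)^{1-\alpha}\bigr)$. Neither point affects your estimates.
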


\begin{proof}
For any $v\in L^2(\Omega)$ with $\|v\|_0=1$, we have
\begin{equation}
\label{derivative_bounds}
\left|\frac{\mathrm{d}^l}{\mathrm{d}t^l}(u(t),v)\right|
=|(\partial_t^l u(t),v)|
\le \|\partial_t^l u(t)\|_0
\le C_l(1+t^{\alpha-l}),
\qquad l=1,3.
\end{equation}
By the definition of Caputo derivative and the L2 approximation, we have
\[
\begin{aligned}
\left.\partial_t^\alpha(u(t),v)\right|_{t=t_k}
&=(\partial_t^\alpha u(t_k),v), \quad
L_k^\alpha\bigl[(u(\cdot),v)\bigr]
=(L_k^\alpha u,v).
\end{aligned}
\]
Thus the truncation error for $t\mapsto(u(t),v)$ is $(r^k,v)$.
Applying \cite[Lemma~5.1]{quan2022h} gives the stated bounds
with $\|r^k\|_0$ replaced by $|(r^k,v)|$.
The constant $C_r$ is independent of $v$, since
\eqref{derivative_bounds} holds with the same $C_1$ and $C_3$
for every $\|v\|_0=1$.
Taking the supremum and using
$\|r^k\|_0
=\sup\limits_{\substack{v\in L^2(\Omega)\\ \|v\|_0=1}}
|(r^k,v)|$
completes the proof.
\end{proof}


We use Lemma~\ref{lem:truncation} with
 $\rho_{\min}=0.86$ and $\rho_{\max}=1.16$. Multiplying the first-step error equation
$e^1-\Gamma(2-\alpha)\tau_1^\alpha\Delta e^1
=-\Gamma(2-\alpha)\tau_1^\alpha r^1$
by $2e^1$ in $L^2(\Omega)$ and applying Young's inequality gives
\begin{equation}\label{error_1}
\left\|e^1\right\|_0^2 + 2 \Gamma(2-\alpha) \tau_1^{\alpha} \left\|\nabla e^1\right\|_0^2 \leq \left(\Gamma(2-\alpha) \tau_1^{\alpha}\right)^2 \left\|r^1\right\|_0^2.
\end{equation}

With the above lemmas in place, we are now ready to state and prove the main optimal convergence theorem for the L2 method.
\begin{theorem}
[$L^2$-norm error estimation of the L2 method on nonuniform meshes]
\label{mainresult}
Assume that $u\in C^3((0,T];H_0^1(\Omega))$ is the solution to subdiffusion equation \eqref{model_subdiffusion} and satisfies $\|\partial_t^{l} u(t)\|_0\le C_l(1+t^{\alpha-l})$ for $l=1,3$, $0< t\le T$.
If the nonuniform mesh satisfies $\rho_k\in [0.86, 1.16]$ for $k\geq2$, then the numerical solutions $u^k$ of L2 scheme \eqref{eq:sch_sub_graded} have the following error estimate:
\begin{equation}\label{eq:main_error_bound}
\begin{aligned}
\|e^k\|_0 + \sqrt{m_{k,k}^{-1}}\|\nabla e^k\|_0 &\leq C_\eta\Bigg[ C_r^2\Gamma^2(2-\alpha)\tau_1^{2\alpha}
+C\max_{2 \leq i \leq k}\Bigg( t_i^\alpha\bigg( \tau_1^\alpha(t_i-t_1)^{-\alpha} \\
&\qquad {}+ \sum_{j=2}^{i-1}\tau_j^4(1+t_{j-1}^{\alpha-3})(t_i-t_j)^{-\alpha-1}
+ \tau_i^{3-\alpha}(1+t_{i-1}^{\alpha-3}) \bigg)\Bigg)^2 \, \Bigg]^{\frac{1}{2}}
\\
&\leq\bar C\max_{1\leq j\leq k}
\tau_j^{3-\alpha}t_j^{2\alpha-3}
\qquad 2\leq k\leq N.
\end{aligned}
\end{equation}
Here
\[
C_\eta
\coloneqq
1+\frac{1}{1-\eta_*}
=
\frac{25+8\sqrt{2}}{7}
\approx 5.1877,
\qquad
\eta_*
\coloneqq
\frac{3+2\sqrt{2}}{2+4\sqrt{2}}.
\]
\(C_r\) is defined in Lemma~\ref{lem:truncation}, and
\(C>0\) may depend on \(T\), \(\alpha\), and the regularity
constants \(C_l\), \(l=1,3\), but is independent of \(k\), \(N\), and the time
steps.
The constant $\bar C>0$ depends only on $C_\eta$, $C_r$, $C$,
$T$, and $\alpha$,
but is independent of $k$, $N$, and the time steps.
\end{theorem}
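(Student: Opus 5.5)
The plan is an energy argument on the recombined error equations, using the cross-row recombination with the fixed parameter $\eta_k$, followed by an induction over time levels. Fix a target level $k\ge3$. The case $k=2$ uses $\eta_2$ and \eqref{eq:startup_own_row} directly. For every $2\le n\le k$ we rewrite the error equation \eqref{error_eq} at level $n$ via \eqref{eq:cross_row_coefficients} as
\[
\frac{1}{\Gamma(1-\alpha)}\sum_{j=1}^n \bar m^{(k)}_{n,j}(\bar e^j-\bar e^{j-1})=\Delta e^n-r^n ,
\qquad \bar e^j=e^j-\eta_k e^{j-1}.
\]
We then test this with $2\bar e^n$ after dividing by $\bar m^{(k)}_{n,n}/\Gamma(1-\alpha)=m_{n,n}/\Gamma(1-\alpha)$.

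First, Abel summation turns the left side into $\bar e^n-\sum_{j=1}^{n-1}\theta_{n,j}\bar e^j$, where $\theta_{n,j}=(\bar m^{(k)}_{n,j+1}-\bar m^{(k)}_{n,j})/m_{n,n}\ge0$. By Theorem~\ref{lem:cross_row_recombination}(ii)--(iii), these weights sum to less than $1$ (recall $\bar e^0=0$).

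Second, we treat the diffusion term. Since $-(\Delta e^n,2\bar e^n)=2\|\nabla e^n\|^2-2\eta_k(\nabla e^n,\nabla e^{n-1})\ge \|\nabla e^n\|^2-\eta_k^2\|\nabla e^{n-1}\|^2$, the tested identity gives
\[
\|\bar e^n\|^2+\frac{\Gamma(1-\alpha)}{m_{n,n}}\bigl(\|\nabla e^n\|^2-\eta_k^2\|\nabla e^{n-1}\|^2\bigr)
\le\sum_{j<n}\theta_{n,j}\|\bar e^j\|^2+\frac{2\Gamma(1-\alpha)}{m_{n,n}}\|r^n\|\|\bar e^n\| .
\]
The negative gradient term is absorbed using Theorem~\ref{lem:cross_row_recombination}(iv): $\frac{\Gamma(1-\alpha)}{m_{n,n}}\eta_k^2\le \theta_{n,n-1}\frac{\Gamma(1-\alpha)}{m_{n-1,n-1}}$. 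This suggests working with the augmented quantity $E^j:=\|\bar e^j\|^2+\frac{\Gamma(1-\alpha)}{m_{j,j}}\|\nabla e^j\|^2$, for which we obtain $E^n\le\sum_{j<n}\theta_{n,j}E^j+\frac{2\Gamma(1-\alpha)}{m_{n,n}}\|r^n\|\sqrt{E^n}$. The level $n=1$ is handled by \eqref{error_1}, since $\bar e^1=e^1$ and $\Gamma(1-\alpha)/m_{1,1}=\Gamma(2-\alpha)\tau_1^\alpha$.

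Third, we run an induction in $n\le k$ with the hypothesis $\sqrt{E^n}\le \sqrt{E^1}+\max_{2\le i\le n}\beta_i$, where $\beta_i$ is chosen so that the source term is controlled by $\frac{2\Gamma(1-\alpha)}{m_{n,n}}\|r^n\|\le 2(1-\sum_j\theta_{n,j})\beta_n$. This requires a lower bound on $1-\sum_j\theta_{n,j}=\bar m^{(k)}_{n,1}/m_{n,n}$. Using Lemma~\ref{mkk&mk1}(i)--(ii) we have $\bar m^{(k)}_{n,1}\ge m_{n,1}\ge t_n^{-\alpha}$, since the proof of (ii) works verbatim with $\eta_k$. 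Hence one can take $\beta_n\simeq \Gamma(1-\alpha)t_n^\alpha\|r^n\|$, and the standard quadratic inequality argument closes the induction. This is the reason for the factor $t_i^\alpha$ in \eqref{eq:main_error_bound}.

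Finally, we pass from $\bar e^k$ to $e^k$. Since $e^k=\bar e^k+\eta_k e^{k-1}$, we would like to use $\|e^{k-1}\|$, but the bound on $e^{k-1}$ was obtained with a different recombination parameter. The cleanest route is to note that all $\bar e^n$, $n\le k$, use the same $\eta_k$. We then unroll $e^k=\sum_{n}\eta_k^{k-n}\bar e^n$ and use $\eta_k\le\eta_*$ from Lemma~\ref{lm:eta_prop}, which gives $\|e^k\|\le(1-\eta_*)^{-1}\max_n\|\bar e^n\|$; together with the gradient part this produces $C_\eta$. Inserting Lemma~\ref{lem:truncation} yields the first inequality. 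The second inequality, the bound by $\max_j\tau_j^{3-\alpha}t_j^{2\alpha-3}$, comes from the standard estimate of the three truncation sums under bounded step ratios: split the $j$-sum at $t_j\le t_i/2$, and use $\tau_1^\alpha\le\tau_1^{3-\alpha}t_1^{2\alpha-3}$ for the first term.

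The main obstacle is the induction bookkeeping with the augmented energy $E^j$. Two points need care: that the gradient absorption in (iv) fits the convex-combination structure with weight exactly $\theta_{n,n-1}$, and that the lower bound $1-\sum\theta\gtrsim t_n^{-\alpha}/m_{n,n}$ holds for cross-row coefficients at all $n\le k$, including $n=2$. If the absorption fails, I would instead carry $\|\nabla e^{n}\|$ separately with weight $\Gamma(1-\alpha)\eta_k^2/m_{n,n}$, using \eqref{eq:3.23_new}.
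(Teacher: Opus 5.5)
Your plan follows the paper's proof in its essentials: one parameter $\eta_k$ for every row $n\le k$, testing with $2\bar e^n$, and moving $\Gamma(1-\alpha)m_{n,n}^{-1}\eta_k^2\|\nabla e^{n-1}\|_0^2$ into the augmented energy via Theorem~\ref{lem:cross_row_recombination}(iv), whose weight is exactly $\theta_{n,n-1}$. However, the induction does not close as you wrote it. When you bound $2\sum_j\theta_{n,j}(\bar e^j,\bar e^n)$ by Cauchy--Schwarz, the left side keeps $(1-S_n)\|\bar e^n\|_0^2=\frac{\bar m^{(k)}_{n,1}}{m_{n,n}}\|\bar e^n\|_0^2$, where $S_n=\sum_j\theta_{n,j}$, and you discard this term. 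Set $x=\sqrt{E^n}$ and $A=\sqrt{E^1}+\max_{i\le n}\beta_i$. The remaining inequality $x^2\le S_nA^2+2(1-S_n)\beta_nx$ implies $x\le A$ only when $A\ge2\beta_n$. For instance, at $n=2$ with $E^1=0$ you only get $\sqrt{E^2}\le2(1-S_2)\beta_2$, and $S_2=2\eta_2-\eta_k$ is of size $\alpha$. Doubling $\beta_n$ repairs the logic. But then the additive bound $\sqrt{E^1}+2\max_i\beta_i$ gives the first inequality in \eqref{eq:main_error_bound} only with an extra factor $\sqrt2$ on the $\tau_1^{2\alpha}$ term, not with the stated constant $C_\eta$. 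The paper instead keeps the discarded term and applies Young's inequality with exactly that weight, $-2\Gamma(1-\alpha)m_{n,n}^{-1}(r^n,\bar e^n)\le\frac{\bar m^{(k)}_{n,1}}{m_{n,n}}\bigl[\|\bar e^n\|_0^2+\bigl(\Gamma(1-\alpha)/\bar m^{(k)}_{n,1}\bigr)^2\|r^n\|_0^2\bigr]$. This gives the convex-combination recursion $E^n\le\sum_j\theta_{n,j}E^j+(1-S_n)\bigl(\Gamma(1-\alpha)/\bar m^{(k)}_{n,1}\bigr)^2\|r^n\|_0^2$. The induction is then immediate, with bound $E^1+C\max_{2\le i\le k}t_i^{2\alpha}\|r^i\|_0^2$, which is precisely the form under the square root in the theorem.

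The second point is the cross row $n=2$ when $k\ge3$. Your chain $\bar m^{(k)}_{n,1}\ge m_{n,1}\ge t_n^{-\alpha}$ holds only for $n\ge3$. By \eqref{eq:startup-original-coefficients}, $m_{2,1}$ is negative whenever $2(1-\alpha)<\alpha\rho_2$. Moreover, Lemma~\ref{mkk&mk1}(i) concerns $\bar m_{2,1}=(1-\eta_2)m_{2,2}$, which differs from $\bar m^{(k)}_{2,1}=(1-2\eta_2+\eta_k)m_{2,2}$ by $(\eta_k-\eta_2)m_{2,2}$, whose sign is unknown. What you need is \eqref{eq:cross_terminal_positive}, i.e.\ $1-2\eta_2+\eta_k\ge1/5$, combined with Lemma~\ref{mkk&mk1}(iii); together they give $\Gamma(1-\alpha)/\bar m^{(k)}_{2,1}\le5\Gamma(2-\alpha)t_2^\alpha$. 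Similarly, for the target $k=2$ the gradient transfer must come from \eqref{eq:3.23_new} with $k=2$, because Theorem~\ref{lem:cross_row_recombination}(iv) is stated only for $k\ge3$. With these two repairs the argument is complete. Your reconstruction $e^k=\sum_{n=1}^k\eta_k^{k-n}\bar e^n$ with the single parameter $\eta_k$ is valid and slightly more economical than the paper's route. The paper invokes the target-row estimate at every level $j\le k$, each with its own $\eta_j$, and unrolls $\|e^j\|_0\le\|e^j-\eta_je^{j-1}\|_0+\eta_*\|e^{j-1}\|_0$. Both routes give the factor $(1-\eta_*)^{-1}$.
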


\begin{proof}
Throughout the proof, \(C>0\) denotes a generic constant with the
dependence specified in the theorem.

Fix an arbitrary time level \(k\geq3\). Throughout the following
induction, \(k\) and \(\eta_k\) are fixed, and we write
\[
\bar e^j\coloneqq e^j-\eta_k e^{j-1},
\qquad 1\leq j\leq k,
\qquad
\bar e^0\coloneqq e^0=0.
\]
For every \(2\leq n\leq k\), the error equation \eqref{error_eq} and
the definition \eqref{eq:cross_row_coefficients} give
\begin{equation}\label{eq:common_row_error}
\begin{aligned}
\bar e^n-\Gamma(1-\alpha)m_{n,n}^{-1}\Delta e^n
={}&
m_{n,n}^{-1}\sum_{j=1}^{n-1}
\big(\bar m_{n,j+1}^{(k)}-\bar m_{n,j}^{(k)}\big)\bar e^j
-\Gamma(1-\alpha)m_{n,n}^{-1}r^n.
\end{aligned}
\end{equation}
Multiplying both sides of \eqref{eq:common_row_error} by
\(2\bar e^n\) in $L^2 (\Omega)$, and using
\(
2(\nabla e^n,\nabla\bar e^n)
=
\|\nabla e^n\|_0^2+\|\nabla\bar e^n\|_0^2
-\eta_k^2\|\nabla e^{n-1}\|_0^2
\),
we obtain
\begin{align}
&\|\bar e^n\|_0^2
+\Gamma(1-\alpha)m_{n,n}^{-1}\|\nabla e^n\|_0^2
+\frac{\bar m_{n,1}^{(k)}}{m_{n,n}}\|\bar e^n\|_0^2
\notag\\
&\quad\leq
\sum_{j=1}^{n-1}
\frac{\bar m_{n,j+1}^{(k)}-\bar m_{n,j}^{(k)}}{m_{n,n}}
\|\bar e^j\|_0^2
+\Gamma(1-\alpha)m_{n,n}^{-1}\eta_k^2
\|\nabla e^{n-1}\|_0^2
-2\Gamma(1-\alpha)m_{n,n}^{-1}(r^n,\bar e^n).
\label{eq:common_energy_before_transfer}
\end{align}
Here the nonnegative term
\(\Gamma(1-\alpha)m_{n,n}^{-1}\|\nabla\bar e^n\|_0^2\)
has been dropped. By Theorem~\ref{lem:cross_row_recombination}(iv), the
gradient term carried from level \(n-1\) satisfies
\[
\begin{aligned}
&\Gamma(1-\alpha)m_{n,n}^{-1}\eta_k^2
\|\nabla e^{n-1}\|_0^2
\leq
\frac{\bar m_{n,n}^{(k)}-\bar m_{n,n-1}^{(k)}}{m_{n,n}}
\Gamma(1-\alpha)m_{n-1,n-1}^{-1}
\|\nabla e^{n-1}\|_0^2.
\end{aligned}
\]
Adding, if \(n\geq3\), the nonnegative gradient contributions
corresponding to \(1\leq j\leq n-2\) to the right-hand side of
\eqref{eq:common_energy_before_transfer}, we have
\begin{align}
&\|\bar e^n\|_0^2
+\Gamma(1-\alpha)m_{n,n}^{-1}\|\nabla e^n\|_0^2
+\frac{\bar m_{n,1}^{(k)}}{m_{n,n}}\|\bar e^n\|_0^2
\notag\\
&\quad\leq
\sum_{j=1}^{n-1}
\frac{\bar m_{n,j+1}^{(k)}-\bar m_{n,j}^{(k)}}{m_{n,n}}
\left(
\|\bar e^j\|_0^2
+\Gamma(1-\alpha)m_{j,j}^{-1}\|\nabla e^j\|_0^2
\right)
-2\Gamma(1-\alpha)m_{n,n}^{-1}(r^n,\bar e^n).
\label{eq:common_energy_closed}
\end{align}
The last term is estimated by
\[
\begin{aligned}
-2\Gamma(1-\alpha)m_{n,n}^{-1}(r^n,\bar e^n)
\leq{}&
\frac{\bar m_{n,1}^{(k)}}{m_{n,n}}\|\bar e^n\|_0^2
+
\frac{\bar m_{n,1}^{(k)}}{m_{n,n}}
\left(\frac{\Gamma(1-\alpha)}{\bar m_{n,1}^{(k)}}\right)^2
\|r^n\|_0^2.
\end{aligned}
\]
Consequently,
\begin{equation}\label{eq:pre_induction}
\begin{aligned}
&\|\bar e^n\|_0^2
+\Gamma(1-\alpha)m_{n,n}^{-1}\|\nabla e^n\|_0^2
\\
&\leq
\sum_{j=1}^{n-1}
\frac{\bar m_{n,j+1}^{(k)}-\bar m_{n,j}^{(k)}}{m_{n,n}}
\left(
\|\bar e^j\|_0^2
+\Gamma(1-\alpha)m_{j,j}^{-1}\|\nabla e^j\|_0^2
\right)
+
\frac{\bar m_{n,1}^{(k)}}{m_{n,n}}
\left(\frac{\Gamma(1-\alpha)}{\bar m_{n,1}^{(k)}}\right)^2
\|r^n\|_0^2.
\end{aligned}
\end{equation}
We next estimate the coefficients
\(\Gamma(1-\alpha)/\bar m_{i,1}^{(k)}\)
for \(2\leq i\leq k\). For \(i=2\), \eqref{App-eq:cross-L1} and Lemma~\ref{mkk&mk1}(iii) give
\begin{equation}\label{last_term_bound1}
\frac{\Gamma(1-\alpha)}{\bar m_{2,1}^{(k)}}
\leq
5\frac{\Gamma(1-\alpha)}{m_{2,2}}
\leq
5\Gamma(2-\alpha)\tau_2^\alpha
\leq
5\Gamma(2-\alpha)t_2^\alpha.
\end{equation}
For \(3\leq i\leq k\),
Theorem~\ref{lem:cross_row_recombination}(i), the backward recursion
\eqref{eq:cross_row_backward}, and 
Lemma~\ref{mkk&mk1}(ii) give
\begin{equation}\label{last_term_bound2}
\bar m_{i,1}^{(k)}
=m_{i,1}+\eta_k\bar m_{i,2}^{(k)}
\geq m_{i,1}\geq t_i^{-\alpha}.
\end{equation}
Combining \eqref{last_term_bound1} and \eqref{last_term_bound2}, it follows that
\begin{equation}\label{eq:cross_first_coefficient_bound}
\max_{2\leq i\leq k}
\left(\frac{\Gamma(1-\alpha)}{\bar m_{i,1}^{(k)}}\right)^2
\|r^i\|_0^2
\leq
C\max_{2\leq i\leq k}t_i^{2\alpha}\|r^i\|_0^2.
\end{equation}
We now use induction with respect to \(n\) to prove
\begin{equation}\label{induction_ineq}
\begin{aligned}
&\|\bar e^n\|_0^2
+\Gamma(1-\alpha)m_{n,n}^{-1}\|\nabla e^n\|_0^2\\
&\quad\leq
\|e^1\|_0^2
+\Gamma(1-\alpha)m_{1,1}^{-1}\|\nabla e^1\|_0^2
+C\max_{2\leq i\leq k}t_i^{2\alpha}\|r^i\|_0^2,
\qquad 1\leq n\leq k.
\end{aligned}
\end{equation}
The case \(n=1\) follows from \(\bar e^1=e^1\).
For \(2\leq n\leq k\), suppose that
\eqref{induction_ineq} holds for all \(1\leq j\leq n-1\).
The weights in the history sum are nonnegative by
Theorem~\ref{lem:cross_row_recombination}(ii).
Using the induction hypothesis and
Theorem~\ref{lem:cross_row_recombination}(iii), we obtain
\begin{equation}\label{eq:induction_history_bound}
\begin{aligned}
&\sum_{j=1}^{n-1}
\frac{\bar m_{n,j+1}^{(k)}-\bar m_{n,j}^{(k)}}{m_{n,n}}
\left(
\|\bar e^j\|_0^2
+\Gamma(1-\alpha)m_{j,j}^{-1}\|\nabla e^j\|_0^2
\right)\\
&\quad\leq
\left(
1-\frac{\bar m_{n,1}^{(k)}}{m_{n,n}}
\right)
\bigg(
\|e^1\|_0^2
+\Gamma(1-\alpha)m_{1,1}^{-1}\|\nabla e^1\|_0^2
+C\max_{2\leq i\leq k}t_i^{2\alpha}\|r^i\|_0^2
\bigg).
\end{aligned}
\end{equation}
Moreover, since
\(\bar m_{n,1}^{(k)}/m_{n,n}>0\),
\eqref{eq:cross_first_coefficient_bound} gives
\begin{equation}\label{eq:induction_residual_bound}
\begin{aligned}
\frac{\bar m_{n,1}^{(k)}}{m_{n,n}}
\left(
\frac{\Gamma(1-\alpha)}{\bar m_{n,1}^{(k)}}
\right)^2
\|r^n\|_0^2
&\leq
\frac{\bar m_{n,1}^{(k)}}{m_{n,n}}
\bigg(
\|e^1\|_0^2
+\Gamma(1-\alpha)m_{1,1}^{-1}\|\nabla e^1\|_0^2
+C\max_{2\leq i\leq k}t_i^{2\alpha}\|r^i\|_0^2
\bigg).
\end{aligned}
\end{equation}
Substituting \eqref{eq:induction_history_bound} and
\eqref{eq:induction_residual_bound} into
\eqref{eq:pre_induction},
we obtain \eqref{induction_ineq} at level \(n\).
This completes the induction.
Taking \(n=k\), and recalling that
\(\bar m_{k,j}^{(k)}=\bar m_{k,j}\), gives
\begin{equation}\label{eq:target_row_estimate}
\begin{aligned}
&\|e^k-\eta_k e^{k-1}\|_0^2
+\Gamma(1-\alpha)m_{k,k}^{-1}\|\nabla e^k\|_0^2\\
&\quad\leq
\|e^1\|_0^2
+\Gamma(1-\alpha)m_{1,1}^{-1}\|\nabla e^1\|_0^2
+C\max_{2\leq i\leq k}t_i^{2\alpha}\|r^i\|_0^2.
\end{aligned}
\end{equation}

For \(k=2\), the same two-level calculation, together with
\(\bar m_{2,1}=(1-\eta_2)m_{2,2}\) and
\eqref{eq:3.23_new}, gives
\[
\begin{aligned}
&\|e^2-\eta_2e^1\|_0^2
+\Gamma(1-\alpha)m_{2,2}^{-1}\|\nabla e^2\|_0^2\\
&\quad\leq
\eta_2\left(
\|e^1\|_0^2
+\Gamma(1-\alpha)m_{1,1}^{-1}\|\nabla e^1\|_0^2
\right)
+(1-\eta_2)
\left(
\frac{\Gamma(1-\alpha)}{\bar m_{2,1}}
\right)^2
\|r^2\|_0^2\\
&\quad\leq
\|e^1\|_0^2
+\Gamma(1-\alpha)m_{1,1}^{-1}\|\nabla e^1\|_0^2
+C t_2^{2\alpha}\|r^2\|_0^2,
\end{aligned}
\]
where the last inequality follows from \(0<\eta_2<1\)
and Lemma~\ref{mkk&mk1}(i).
Hence \eqref{eq:target_row_estimate} holds for all
\(2\leq k\leq N\).

Since \eqref{eq:target_row_estimate} holds at every target level
with the same constant $C$, discarding its gradient term gives
\begin{equation}\label{eq:row_difference_bound}
\|e^j-\eta_j e^{j-1}\|_0
\le
\bigg(
\|e^1\|_0^2
+\Gamma(1-\alpha)m_{1,1}^{-1}\|\nabla e^1\|_0^2
+C\max_{2\le i\le k}t_i^{2\alpha}\|r^i\|_0^2
\bigg)^{1/2},
\qquad 2\le j\le k.
\end{equation}
The same right-hand side also bounds $\|e^1\|_0$.
Successive use of
$\|e^j\|_0\le\|e^j-\eta_j e^{j-1}\|_0+\eta_*\|e^{j-1}\|_0$
and $\sum_{\ell=0}^{k-1}\eta_*^\ell\le(1-\eta_*)^{-1}$ yields
\begin{equation}\label{eq:error_reconstruction_bound}
\|e^k\|_0
\le\frac{1}{1-\eta_*}
\bigg(
\|e^1\|_0^2
+\Gamma(1-\alpha)m_{1,1}^{-1}\|\nabla e^1\|_0^2
+C\max_{2\le i\le k}t_i^{2\alpha}\|r^i\|_0^2
\bigg)^{1/2}.
\end{equation}
Retaining the gradient term in \eqref{eq:target_row_estimate}
and using $\Gamma(1-\alpha)\ge1$ gives
\begin{equation}\label{eq:gradient_reconstruction_bound}
\sqrt{m_{k,k}^{-1}}\|\nabla e^k\|_0
\le
\bigg(
\|e^1\|_0^2
+\Gamma(1-\alpha)m_{1,1}^{-1}\|\nabla e^1\|_0^2
+C\max_{2\le i\le k}t_i^{2\alpha}\|r^i\|_0^2
\bigg)^{1/2}.
\end{equation}

Adding \eqref{eq:error_reconstruction_bound} and
\eqref{eq:gradient_reconstruction_bound}, we obtain
\begin{equation}\label{result}
\begin{aligned}
&\|e^k\|_0+\sqrt{m_{k,k}^{-1}}\|\nabla e^k\|_0\leq
C_\eta
\bigg(
\|e^1\|_0^2
+\Gamma(1-\alpha)m_{1,1}^{-1}\|\nabla e^1\|_0^2
+C\max_{2\leq i\leq k}t_i^{2\alpha}\|r^i\|_0^2
\bigg)^{1/2},
\end{aligned}
\end{equation}
with \(C_\eta=1+(1-\eta_*)^{-1}\).

Since $\Gamma(1-\alpha)m_{1,1}^{-1}
=\Gamma(2-\alpha)\tau_1^\alpha$,
the initial-step estimate \eqref{error_1} yields

\begin{equation}\label{eq:initial_energy_bound}
\|e^1\|_0^2
+\Gamma(1-\alpha)m_{1,1}^{-1}\|\nabla e^1\|_0^2
\leq
\Gamma^2(2-\alpha)\tau_1^{2\alpha}\|r^1\|_0^2.
\end{equation}
Finally, substituting \eqref{eq:initial_energy_bound}
and the truncation error bounds from
Lemma~\ref{lem:truncation} into \eqref{result}
gives the first inequality in \eqref{eq:main_error_bound}.
Elementary estimates of the right-hand side of the first inequality
yield the second inequality in \eqref{eq:main_error_bound}.
\end{proof}

Next, we specialize our analysis on the modified graded mesh proposed by Kopteva in \cite{kopteva2021error} to establish the optimal \(L^2\)-norm convergence of the L2 method.
For a grading parameter \(r>1\), define
\begin{equation}\label{eq:k0}
    k_0 \coloneqq \min \left\{ k \in \mathbb{N} : \frac{(k+2)^r - (k+1)^r}{(k+1)^r - k^r} \leq \rho^* \right\}.
\end{equation}
The modified graded mesh is then defined by
\begin{equation}\label{eq:mod_mesh}
    t_k = T\frac{(k+k_0)^r - k_0^r}{(N+k_0)^r - k_0^r}, \quad \tau_k = t_k - t_{k-1}, \quad 1 \leq k \leq N.
\end{equation}
Note that when $k_0 = 0$, this modified graded mesh \eqref{eq:mod_mesh} coincides with the standard graded mesh. It is not difficult to verify that there exists a constant \(C_{\mathrm{mg}}>0\), depending only on
\(k_0\) and \(r\), such that the modified graded mesh satisfies
\begin{equation}\label{eq:mod_mesh_prop}
t_k
\leq C_{\mathrm{mg}}T N^{-r}k^r,
\qquad
\tau_k
\leq C_{\mathrm{mg}}T N^{-r}k^{r-1},
\qquad
1\leq\frac{\tau_k}{\tau_{k-1}}\leq\rho^*,
\qquad 2 \leq k \leq N.
\end{equation}

\begin{lemma}\label{lem:truncation_mod}
    For the modified graded mesh \eqref{eq:mod_mesh}, the truncation error $r^k$ satisfies $\|r^k\|_0 \leq C_r k^{-\min\{r\alpha, 3-\alpha\}}$.
Here the regularity assumptions of Lemma~\ref{lem:truncation} are imposed,
and $C_r>0$ may depend on $T$, $\alpha$, $C_l$ for $l=1,3$, the fixed parameters $r$, $k_0$, and the
fixed bounds $\rho_{\min}$, $\rho_{\max}$, but is independent of $k$,
$N$ and the time steps.
\end{lemma}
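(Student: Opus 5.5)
The plan is to start from the general bound \eqref{eq:approxerror} of Lemma~\ref{lem:truncation}, which applies because \eqref{eq:mod_mesh_prop} gives $\rho_k\in[1,\rho^*]$. I then estimate each of the three groups of terms using two-sided power bounds for the modified graded mesh. Besides the upper bounds in \eqref{eq:mod_mesh_prop}, I need the matching lower bounds
\[
t_k\ge c\,TN^{-r}k^r,\qquad \tau_k\ge c\,TN^{-r}k^{r-1},\qquad 1\le k\le N,
\]
with $c>0$ depending only on $r$ and $k_0$. These follow from \eqref{eq:mod_mesh} by the mean value theorem applied to $s\mapsto(s+k_0)^r$, since $k_0$ is fixed. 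With these bounds, all powers of $N$ cancel in the terms below, which is why the final estimate depends only on $k$. The case $k=1$ is immediate from $\|r^1\|_0\le C_r$.

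\textbf{Boundary terms.} For $k\ge2$ we have $t_k-t_1\ge t_k/2$ up to constants, so
\[
\tau_1^\alpha(t_k-t_1)^{-\alpha}\le C\,(N^{-r})^\alpha\,(N^{-r}k^r)^{-\alpha}=Ck^{-r\alpha}.
\]
For the last term, $\tau_k^{3-\alpha}t_{k-1}^{\alpha-3}\le C\,(N^{-r}k^{r-1})^{3-\alpha}(N^{-r}k^r)^{\alpha-3}=Ck^{-(3-\alpha)}$. For the accompanying ``$1\cdot$'' part, $\tau_k\le C N^{-r}k^{r-1}=C(k/N)^r k^{-1}\le Ck^{-1}$ because $r>1$ and $k\le N$, so $\tau_k^{3-\alpha}\le Ck^{-(3-\alpha)}$. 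More generally, every ``$1+$'' contribution is controlled by the singular one, since $1\le T^{3-\alpha}t_{j-1}^{\alpha-3}$.

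\textbf{The history sum (main work).} I split $\sum_{j=2}^{k-1}$ into $j\le k/2$ and $k/2<j\le k-1$.

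For $j\le k/2$, we have $t_k-t_j\ge c\,t_k$, and
\[
\tau_j^4t_{j-1}^{\alpha-3}\le C N^{-r(1+\alpha)}j^{r(1+\alpha)-4},
\]
using $t_{j-1}\ge cN^{-r}(j-1)^r$ with $j-1\ge j/2$. Multiplying by $t_k^{-\alpha-1}\le CN^{r(1+\alpha)}k^{-r(1+\alpha)}$ gives
\[
k^{-r(1+\alpha)}\sum_{j\le k/2}j^{r(1+\alpha)-4}.
\]
This splits into three cases according to the exponent $r(1+\alpha)-4$:
\begin{itemize}
\item if it is $>-1$, the sum is $\le Ck^{r(1+\alpha)-3}$, giving $Ck^{-3}$;
\item if it is $<-1$, the sum is bounded, giving $Ck^{-r(1+\alpha)}\le Ck^{-r\alpha}$;
\item if it equals $-1$, the sum is $\le C\log k$, giving $Ck^{-3}\log k\le Ck^{-(3-\alpha)}$.
\end{itemize}

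For $k/2<j\le k-1$, quasi-uniformity in this range gives $\tau_j\simeq\tau_k$ and $t_{j-1}\simeq t_k$. Since $\tau_i\ge\tau_{j}$ for $i>j$ (the steps increase), $t_k-t_j\ge(k-j)\tau_j\ge c(k-j)\tau_k$. Hence this part is bounded by
\[
C\tau_k^{3-\alpha}t_k^{\alpha-3}\sum_{m\ge1}m^{-1-\alpha}\le C\tau_k^{3-\alpha}t_k^{\alpha-3}\le Ck^{-(3-\alpha)}.
\]

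Collecting all terms gives $\|r^k\|_0\le C_r\,k^{-\min\{r\alpha,3-\alpha\}}$. I expect the main obstacle to be the first half of the history sum: the three-way case distinction on $r(1+\alpha)-4$, and in particular checking that the borderline logarithm is absorbed by the spare factor $k^{-\alpha}$ between $k^{-3}$ and $k^{-(3-\alpha)}$. The remaining steps are routine power counting once the two-sided mesh bounds are in place.
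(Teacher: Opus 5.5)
Your argument is correct, but it takes a different route from the paper. The paper's proof is a one-line citation. It invokes \cite[Lemma~5.3]{quan2022h}, the scalar truncation bound already proved there for the modified graded mesh, and lifts it to $L^2(\Omega)$ by the same duality argument used for Lemma~\ref{lem:truncation}.

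You instead start from the already-lifted general bound \eqref{eq:approxerror}, so no second duality step is needed, and you do the power counting yourself with two-sided mesh bounds. The following steps all check out:
\begin{itemize}
\item the split of the history sum at $j=k/2$;
\item the three cases for $r(1+\alpha)-4$, including absorbing the logarithm into $k^{-(3-\alpha)}$;
\item the near-diagonal estimate $t_k-t_j\ge(k-j)\tau_j$ with $\sum_m m^{-1-\alpha}<\infty$.
\end{itemize}
Your route gives a self-contained derivation. It shows why all powers of $N$ cancel and where each exponent comes from. The rate $r\alpha$ comes from the startup term $\tau_1^\alpha(t_k-t_1)^{-\alpha}$ and from the far history when $r(1+\alpha)<3$. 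The rate $3-\alpha$ comes from the local and near-diagonal terms. The paper's route is shorter, but it depends on an external lemma whose mesh hypotheses must match \eqref{eq:mod_mesh}.

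Two small points need to be made explicit.

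\emph{First}, the claim $t_k-t_j\ge c\,t_k$ for $j\le k/2$ does not follow from two-sided bounds $t_k\simeq TN^{-r}k^r$ with generic constants. The ratio of the upper and lower constants, of order $(1+k_0)^{2r}$, can exceed the gain $2^{r}$ from $j\le k/2$ once $k_0\ge1$. The claim does follow from the nondecreasing steps: $t_j\le j\tau_j\le(k-j)\tau_j\le t_k-t_j$, so $t_k-t_j\ge t_k/2$. Equivalently, it follows from convexity of $s\mapsto(s+k_0)^r-k_0^r$. The same monotonicity gives $t_k-t_1\ge\tau_2\ge\tau_1=t_1$, hence $t_k-t_1\ge t_k/2$, for your startup term.

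\emph{Second}, the mean value theorem alone does not give the lower bound for $\tau_1$ when $k_0=0$. In that case $\tau_1=t_1$, and the lower bound for $t_k$ follows from superadditivity: $(k+k_0)^r-k_0^r\ge k^r$ for $r\ge1$, together with $N+k_0\le(1+k_0)N$.
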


\begin{proof}
The stated $L^2$ truncation error estimate follows from
\cite[Lemma~5.3]{quan2022h} by the same duality argument
as in the proof of Lemma~\ref{lem:truncation}.
\end{proof}

\begin{theorem}[Optimal error estimate on modified graded mesh]
\label{thm:optimal_error}
Assume that $u\in C^3((0,T];H_0^1(\Omega))$ is the solution to subdiffusion equation \eqref{model_subdiffusion} and satisfies $\|\partial_t^{l} u(t)\|_0\le C_l(1+t^{\alpha-l})$ for $l=1,3$, $0< t\le T$.
If the modified graded mesh \eqref{eq:mod_mesh}, with grading parameter
\(r>1\) and the fixed value \(\rho^\ast=1.16\) in
\eqref{eq:k0}, is employed, then the numerical solutions \(u^n\) of
the L2 scheme \eqref{eq:sch_sub_graded} satisfy, for \(1\leq n\leq N\),
\begin{equation}\label{eq:err_final_asymptotic}
\|e^n\|_0
+\sqrt{m_{n,n}^{-1}}\|\nabla e^n\|_0
\leq
\begin{cases}
\widetilde C N^{-r\alpha},
&
\displaystyle
1<r\leq\frac{3-\alpha}{\alpha},
\\[2mm]
\widetilde C N^{-(3-\alpha)},
&
\displaystyle
r>\frac{3-\alpha}{\alpha},
\end{cases}
\end{equation}
where \(\widetilde C>0\) may depend on \(T\), \(\alpha\), \(\Omega\),
and the regularity constants \(C_l\), \(l=1,3\), \(r\), \(k_0\), but is independent of
\(N\) and \(n\).
\end{theorem}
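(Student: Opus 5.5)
The plan is to specialize the first inequality of Theorem~\ref{mainresult} to the modified graded mesh \eqref{eq:mod_mesh}. First we check that Theorem~\ref{mainresult} applies. By \eqref{eq:mod_mesh_prop}, $1\le\rho_k\le\rho^*=1.16$ for $k\ge2$, so $\rho_k\in[0.86,1.16]$. Hence, for $2\le n\le N$,
\[
\|e^n\|_0+\sqrt{m_{n,n}^{-1}}\|\nabla e^n\|_0
\le C_\eta\Big[\Gamma^2(2-\alpha)\tau_1^{2\alpha}\|r^1\|_0^2+C\max_{2\le i\le n}t_i^{2\alpha}\|r^i\|_0^2\Big]^{1/2},
\]
which is the form \eqref{result} takes before the truncation bounds are inserted. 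The case $n=1$ follows directly from \eqref{error_1}, together with $\Gamma(1-\alpha)m_{1,1}^{-1}=\Gamma(2-\alpha)\tau_1^\alpha$ and $\Gamma(1-\alpha)\ge1$.

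The initial term is controlled by $\|r^1\|_0\le C_r$ from Lemma~\ref{lem:truncation} and by $\tau_1=t_1\le C_{\mathrm{mg}}TN^{-r}$ from \eqref{eq:mod_mesh_prop}. Together these give $\tau_1^{2\alpha}\|r^1\|_0^2\le CN^{-2r\alpha}$. For the history terms we combine Lemma~\ref{lem:truncation_mod}, $\|r^i\|_0\le C_r i^{-\min\{r\alpha,3-\alpha\}}$, with $t_i^\alpha\le CN^{-r\alpha}i^{r\alpha}$ from \eqref{eq:mod_mesh_prop}. This yields
\[
t_i^\alpha\|r^i\|_0\le C\,N^{-r\alpha}\,i^{\,r\alpha-\min\{r\alpha,3-\alpha\}}.
\]

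We then split into two cases. If $r\alpha\le3-\alpha$, the exponent of $i$ vanishes, so the maximum over $i$ is $CN^{-r\alpha}$. If $r\alpha>3-\alpha$, the exponent $r\alpha-(3-\alpha)$ is positive, so the expression is largest at $i=n\le N$. It is therefore bounded by $CN^{-r\alpha}N^{r\alpha-(3-\alpha)}=CN^{-(3-\alpha)}$. In this case the initial term $N^{-r\alpha}$ is also at most $N^{-(3-\alpha)}$. Taking square roots and combining with $C_\eta$ gives \eqref{eq:err_final_asymptotic}. The constant depends only on $T,\alpha,C_l,r,k_0$ through $C_{\mathrm{mg}}$, $C_r$, and $C$.

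The main obstacle is the truncation estimate behind Lemma~\ref{lem:truncation_mod}, which sums the three contributions in \eqref{eq:approxerror} on the graded mesh. That lemma is given, however. The only point needing care here is that the graded-mesh bound $t_i\le C_{\mathrm{mg}}TN^{-r}i^r$ provides exactly the factor $N^{-r\alpha}i^{r\alpha}$ that balances the decay in $i$. This keeps the maximum over $i$ uniform in $n$, and the boundary case $r=(3-\alpha)/\alpha$ falls into the first branch.
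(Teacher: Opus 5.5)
Your proposal is correct and follows the paper's proof essentially step for step. You insert the initial-step bound \eqref{eq:initial_energy_bound} and Lemma~\ref{lem:truncation_mod} into \eqref{result}, use $t_i^\alpha\le CN^{-r\alpha}i^{r\alpha}$ from \eqref{eq:mod_mesh_prop}, bound $i^{r\alpha-p}\le N^{r\alpha-p}$ with $p=\min\{r\alpha,3-\alpha\}$, and handle $n=1$ separately via \eqref{error_1}; you also explicitly check $\rho_k\in[0.86,1.16]$, needed to invoke Theorem~\ref{mainresult}, which the paper leaves implicit.
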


\begin{proof}

Throughout the proof, $\widetilde C>0$ denotes a generic constant
with the dependence stated in the theorem. Set
$p\coloneqq\min\{r\alpha,3-\alpha\}$.
By \eqref{error_1}, $m_{1,1}^{-1}=(1-\alpha)\tau_1^\alpha$,
Lemma~\ref{lem:truncation}, and \eqref{eq:mod_mesh_prop},
\[
\|e^1\|_0+\sqrt{m_{1,1}^{-1}}\|\nabla e^1\|_0
\le\widetilde C\tau_1^\alpha
\le\widetilde C N^{-r\alpha}
\le\widetilde C N^{-p}.
\]
For $2\le n\le N$, combining \eqref{result},
\eqref{eq:initial_energy_bound}, and Lemma~\ref{lem:truncation_mod}
with the node bounds in \eqref{eq:mod_mesh_prop} gives
\[
\begin{aligned}
\|e^n\|_0+\sqrt{m_{n,n}^{-1}}\|\nabla e^n\|_0
&\le C_\eta\left[
C_r^2\Gamma^2(2-\alpha)\tau_1^{2\alpha}
+CC_r^2\max_{2\le i\le n}t_i^{2\alpha}i^{-2p}
\right]^{1/2}\\
&\le\widetilde C N^{-r\alpha}
\left(1+\max_{2\le i\le n}i^{2(r\alpha-p)}\right)^{1/2}
\le\widetilde C N^{-p}.
\end{aligned}
\]
The last inequality uses $r\alpha-p\ge0$ and $n\le N$.
Together with the first-step estimate, this proves
\eqref{eq:err_final_asymptotic}.
\end{proof}

\section{Numerical experiments}\label{sec:numerical}

We first examine the error estimate on random meshes and the optimal
convergence rates on modified graded meshes using the same exact
solution. We then compare the accuracy and computational cost of
different time meshes for a solution with varying temporal scales.
In all computations, the L2 coefficients are evaluated by the
threshold-condition and Taylor-expansion technique in
\cite[Algorithm~3.2]{quan2024roundoff}, with $\delta=10^{-11}$,
to avoid the round-off errors. 
We employ the spectral collocation method
\cite{trefethen2000spectral,shen2011spectral} in space with
$20^2$ Chebyshev--Gauss--Lobatto points in all three experiments.

\subsection{Error estimates on random and modified graded meshes}
\label{sec:num_power_test}

Consider the subdiffusion equation \eqref{model_subdiffusion} with
$T=1$, $\Omega=[-1,1]^2$, homogeneous Dirichlet boundary conditions,
and $u^0=0$. We use the same weakly singular solution as in the numerical experiment of \cite{quan2022h}, namely,
\begin{equation}\label{eq:num_exact_forcing}
\begin{aligned}
u(t,x,y)&=t^\alpha\sin(\pi x)\sin(\pi y),\\
f(t,x,y)&=\bigl[\Gamma(1+\alpha)+2\pi^2t^\alpha\bigr]
             \sin(\pi x)\sin(\pi y).
\end{aligned}
\end{equation}
The time derivative behaves like $t^{\alpha-1}$ near $t=0$, so that this example exhibits the initial weak singularity allowed in our analysis.

We define the global \(L^2\) error by
\begin{equation}\label{eq:num_max_error}
E_N^{\max}
\coloneqq
\max_{1\leq k\leq N}\|u(t_k)-u^k\|_0.
\end{equation}

\subsubsection{Random nonuniform meshes}\label{sec:num_nonuniform}
We first examine \eqref{eq:main_error_bound} for
$\alpha=0.3,0.5,0.7$. For a prescribed initial step, we generate
independent random ratios $\rho_k\sim\mathcal U[0.86,1.16]$, where $\mathcal U$ denotes the uniform distribution.
We take $\tau_1=10^{-6},10^{-9},10^{-12}$, with random seeds
$17,43,97$ in MATLAB R2024b, respectively. These initial steps allow us to examine
the estimate over several scales of the initial mesh resolution.
The same three meshes are used for each value of $\alpha$.
The final few steps are adjusted to reach $T=1$ while preserving
$0.86\leq\rho_k\leq1.16$. The resulting numbers of steps are
$N=1843,2067,2715$, respectively.

Let $L_k$ and $R_k$ denote the left- and right-hand sides of the first
inequality in \eqref{eq:main_error_bound}, respectively.
For each $\alpha$, the constants in $R_k$ are fixed a priori,
independently of the computed errors.
Tracing the constants in \cite[Lemma~5.1]{quan2022h} for
\eqref{eq:num_exact_forcing} gives
$C_r\approx0.3218947870$, $0.2768465166$, and $0.2377764221$
for $\alpha=0.3,0.5,0.7$, respectively.
A sufficient choice of $C$ in \eqref{eq:main_error_bound} is
\begin{equation}\label{eq:num_constants_used}
C=C_r^2\max\bigl\{\Gamma^2(1-\alpha),\,
25\Gamma^2(2-\alpha)\bigr\}.
\end{equation}
The constant $C_\eta$ is kept as defined in Theorem~\ref{mainresult}.

Figure~\ref{fig:num_random_ratio} shows $L_k/R_k<1$ in all nine runs,
consistent with \eqref{eq:main_error_bound} on these nonmonotone meshes.
The maxima over the three meshes are approximately
$3.10049\times10^{-3}$, $6.04931\times10^{-3}$, and
$7.22319\times10^{-3}$ for $\alpha=0.3,0.5,0.7$, respectively.

\begin{figure}[tbp]
\centering
\includegraphics[width=\textwidth]{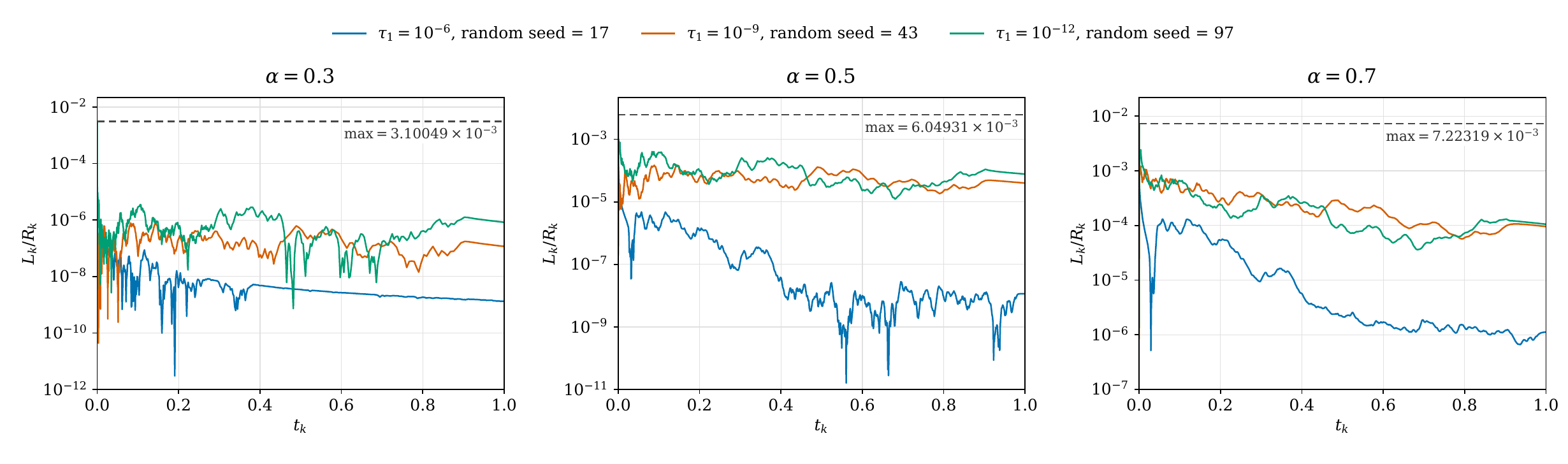}
\caption{The ratios $L_k/R_k$ on random nonuniform meshes.
From left to right, $\alpha=0.3,0.5,0.7$. Each curve corresponds to
the initial step and random seed given in the legend. The horizontal
axes show physical time on a linear scale; the vertical axes are
logarithmic and have different ranges. Each dashed horizontal line
marks the observed maximum over the three meshes in that panel.}
\label{fig:num_random_ratio}
\end{figure}

\subsubsection{Modified graded meshes}\label{sec:num_graded}
We now examine the convergence on the modified graded meshes
\eqref{eq:mod_mesh}. In particular, we consider grading parameters below,
at, and above the threshold $(3-\alpha)/\alpha$ in
Theorem~\ref{thm:optimal_error}.

We take $\alpha=0.3,\,0.5,\,0.7$ and
\[
r=\frac{2}{\alpha},\qquad
r=\frac{3-\alpha}{\alpha},\qquad
r=\frac{5-\alpha}{\alpha},
\]
with $N=8000,\,10000,\,12000,\,14000,\,16000$.
The observed order between two
successive meshes is computed as
\begin{equation}\label{eq:num_order}
\mathrm{Order}_j=
\frac{\log\bigl(E_{N_{j-1}}^{\max}/E_{N_j}^{\max}\bigr)}
     {\log(N_j/N_{j-1})},
\qquad j=2,\ldots,5.
\end{equation}

Table~\ref{tab:num_max_errors} shows orders close to the rates
$2$, $3-\alpha$, and $3-\alpha$ predicted by
Theorem~\ref{thm:optimal_error} for the three choices of $r$.
For $\alpha=0.3$ and $r=(5-\alpha)/\alpha$, the larger order on
the first refinement interval is followed by orders near $3-\alpha$. These results are consistent with the predicted rate.

\begin{table}[tbp]
\centering
\captionsetup{position=top}
\caption{Global $L^2$ errors $E_N^{\max}$ and convergence
orders for the L2 scheme on modified graded meshes with different
$\alpha$, $r$, and $N$. Each error row is followed by the observed
orders defined in \eqref{eq:num_order}.}
\label{tab:num_max_errors}

\begingroup
\small
\setlength{\tabcolsep}{3.5pt}
\renewcommand{\arraystretch}{1.10}
\begin{tabular*}{\linewidth}{@{\extracolsep{\fill}}ccc*{5}{c}@{}}
\toprule
$\alpha$ & $r$ & $k_0$
& $N=8000$ & $N=10000$ & $N=12000$ & $N=14000$ & $N=16000$ \\
\midrule
$\alpha=0.3$ & $r=\frac{2}{\alpha}$ & 38
& 2.5015e-06 & 1.6041e-06 & 1.1154e-06 & 8.2027e-07 & 6.2846e-07 \\
& & & --- & 1.9911 & 1.9928 & 1.9940 & 1.9948 \\
& $r=\frac{3-\alpha}{\alpha}$ & 53
& 1.4269e-07 & 7.8393e-08 & 4.8031e-08 & 3.1732e-08 & 2.2155e-08 \\
& & & --- & 2.6840 & 2.6870 & 2.6890 & 2.6905 \\
& $r=\frac{5-\alpha}{\alpha}$ & 98
& 1.0627e-10 & 4.7189e-11 & 2.8653e-11 & 1.8789e-11 & 1.3035e-11 \\
& & & --- & 3.6380 & 2.7363 & 2.7375 & 2.7381 \\
\midrule
$\alpha=0.5$ & $r=\frac{2}{\alpha}$ & 20
& 6.1952e-07 & 3.9689e-07 & 2.7581e-07 & 2.0273e-07 & 1.5527e-07 \\
& & & --- & 1.9954 & 1.9963 & 1.9969 & 1.9973 \\
& $r=\frac{3-\alpha}{\alpha}$ & 26
& 5.8414e-08 & 3.3492e-08 & 2.1255e-08 & 1.4469e-08 & 1.0368e-08 \\
& & & --- & 2.4927 & 2.4941 & 2.4950 & 2.4957 \\
& $r=\frac{5-\alpha}{\alpha}$ & 53
& 2.5430e-10 & 1.4577e-10 & 9.2486e-11 & 6.2940e-11 & 4.5091e-11 \\
& & & --- & 2.4937 & 2.4955 & 2.4967 & 2.4976 \\
\midrule
$\alpha=0.7$ & $r=\frac{2}{\alpha}$ & 12
& 1.5989e-07 & 1.0239e-07 & 7.1135e-08 & 5.2277e-08 & 4.0033e-08 \\
& & & --- & 1.9973 & 1.9978 & 1.9981 & 1.9984 \\
& $r=\frac{3-\alpha}{\alpha}$ & 15
& 3.5760e-08 & 2.1423e-08 & 1.4093e-08 & 9.8904e-09 & 7.2772e-09 \\
& & & --- & 2.2961 & 2.2968 & 2.2973 & 2.2977 \\
& $r=\frac{5-\alpha}{\alpha}$ & 34
& 6.3764e-10 & 3.8226e-10 & 2.5159e-10 & 1.7662e-10 & 1.2998e-10 \\
& & & --- & 2.2930 & 2.2943 & 2.2952 & 2.2959 \\
\bottomrule
\end{tabular*}
\endgroup
\end{table}

\subsection{Comparison of uniform and nonuniform time meshes}
\label{sec:num_adaptive}

We next compare modified graded, adaptive, and fixed-step meshes
after a common startup using the modified graded mesh. Here uniform stepping refers to the fixed
steps after this startup.
Consider \eqref{model_subdiffusion} on $\Omega=(-1,1)^2$, with
$T=10$, homogeneous Dirichlet boundary conditions, zero initial
data, and
\begin{equation}\label{eq:num_adaptive_source}
\begin{aligned}
f(t,x,y)&=q(t)\sin(\pi x)\sin(\pi y),\\
q(t)&\coloneqq1+e^{-4(t-2)^2}+e^{-16(t-5)^2}+e^{-64(t-8)^2}.
\end{aligned}
\end{equation}
The exact solution admits the integral representation
\begin{equation}\label{eq:num_reference_solution}
u(t,x,y)=\sin(\pi x)\sin(\pi y)
\int_0^t (t-s)^{\alpha-1}
E_{\alpha,\alpha}\!\left(-2\pi^2(t-s)^\alpha\right)
q(s)\,\mathrm{d}s,
\end{equation}
where $E_{\alpha,\beta}(z)=\sum_{j=0}^{\infty}
z^j/\Gamma(\alpha j+\beta)$ is the two-parameter
Mittag--Leffler function.
We compute the reference solution using the composite trapezoidal
rule after a logarithmic change of variables, with $1009$, $729$,
and $594$ quadrature nodes for $\alpha=0.3$, $0.5$, and $0.7$,
respectively (doubling each number of quadrature nodes only changes the reference
values by less than $6\times10^{-15}$ at all computed time nodes,
well below the reported numerical errors). This computation is independent of the L2
discretization.
We use this example to compare the accuracy and computational
cost of the three meshes for a solution with an initial weak
singularity and rapid changes at later times. 

We first specify the common graded startup for the three meshes.
For $\alpha=0.3,0.5,0.7$, we construct a modified graded mesh
\eqref{eq:mod_mesh}, denoted by $\{t_j^G\}_{j=0}^{N_G}$, with
\[
r=\frac{3-\alpha}{\alpha}+2,\qquad
(k_0,N_G)=(67,397),\ (40,806),\ (28,1017),
\]
respectively. Here $k_0$ is selected by \eqref{eq:k0}, and $N_G$
replaces $N$ in \eqref{eq:mod_mesh}. 
For each fixed $\alpha$, we set $t_s=0.01$ and define
\begin{equation}\label{eq:num_shared_start}
\tau_{\min}=t_p^G-t_{p-1}^G,\quad
\text{with }p=\max\{j:t_j^G\leq t_s\}.
\end{equation}
Here, $p=180,275,254$ for $\alpha = 0.3,0.5,0.7$, respectively.
In each case, the maximum error on the adaptive mesh occurs
after this startup.

\begin{figure}[tbp]
\centering
\includegraphics[width=\textwidth]{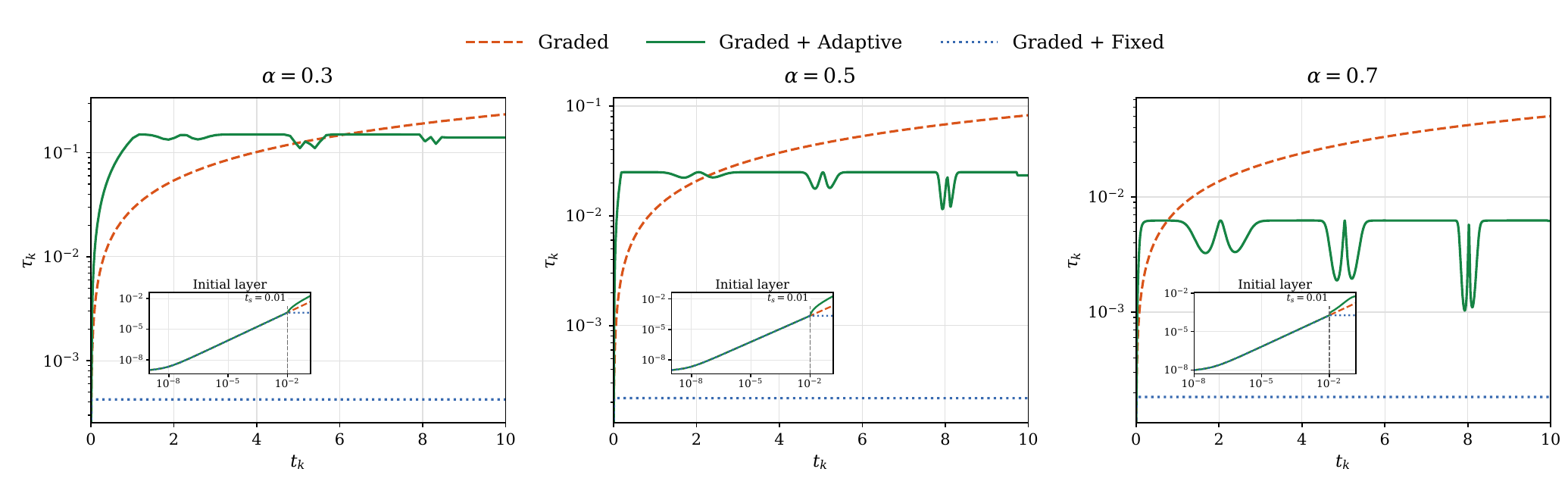}
\caption{Time-step evolution on the three meshes.
From left to right, \(\alpha=0.3,0.5,0.7\).
The insets labelled ``Initial layer'' show the common modified
graded startup used to resolve the initial weak singularity,
together with the subsequent separation of the meshes.
The startup cutoff is \(t_s=0.01\); the meshes separate at the
last graded node not exceeding this value, as defined in
\eqref{eq:num_shared_start}.}
\label{fig:num_adaptive_steps}
\end{figure}

\begin{table}[tbp]
\centering
\captionsetup{position=top}
\caption{Maximum $L^2$ errors, total step counts, and elapsed times
on the three meshes. All computations include the common startup
selected by $t_s=0.01$ and end at $t_N=10$.}
\label{tab:num_adaptive_cost}
\small
\setlength{\tabcolsep}{5pt}
\begin{tabular}{@{}llllll@{}}
\toprule
$\alpha$ & Mesh & $N$ & $t_N$ & $E_N^{\max}$ & Time (s) \\
\midrule
0.3 & Graded & 397 & 10 & $1.064\times10^{-3}$ & 0.19024 \\
 & Graded + Adaptive & 282 & 10 & $5.954\times10^{-4}$ & 0.13287 \\
 & Graded + Fixed & 23713 & 10 & $3.888\times10^{-4}$ & 352.85 \\
\addlinespace
0.5 & Graded & 806 & 10 & $3.580\times10^{-4}$ & 0.66486 \\
 & Graded + Adaptive & 722 & 10 & $1.986\times10^{-5}$ & 0.56172 \\
 & Graded + Fixed & 46019 & 10 & $7.641\times10^{-6}$ & 1348.7 \\
\addlinespace
0.7 & Graded & 1017 & 10 & $3.624\times10^{-4}$ & 1.0135 \\
 & Graded + Adaptive & 2436 & 10 & $1.062\times10^{-6}$ & 4.89 \\
 & Graded + Fixed & 54816 & 10 & $5.103\times10^{-7}$ & 1908.3 \\
\bottomrule
\end{tabular}
\end{table}

After $t_p^G$, the modified graded mesh follows its remaining
prescribed nodes, while the fixed-step mesh uses the constant
target step $\tau_{\min}$.
For the adaptive mesh, we use the roughness-based strategy
in \cite{qiao2011adaptive}. Define
\begin{equation}\label{eq:num_roughness}
w^k=\left(\frac{1}{|\Omega|}
\int_\Omega (u^k-\bar u^k)^2\,\mathrm dx\right)^{1/2}
,\qquad
\omega_k=\frac{w^k-w^{k-1}}{\tau_k},
\end{equation}
where $\bar u^k$ is the spatial mean, which is zero up to
roundoff in this example.
The proposed adaptive step is
\begin{equation}\label{eq:num_continuations}
\widehat\tau_{k+1}
=\max\left\{\tau_{\min},
\frac{\tau_{\max}}{\sqrt{1+\beta\omega_k^2}}\right\},
\qquad k\geq p.
\end{equation}
We fix
\(
(\tau_{\max},\beta)=(0.15,156.25),\ (0.025,156.25),\
(0.00625,1562.5)
\)
for $\alpha=0.3,0.5,0.7$, respectively. The parameter $\beta$
remains fixed throughout each computation. The accepted adaptive
step is
\begin{equation}\label{eq:num_adaptive_clipping}
\tau_{k+1}
=\min\left\{\tau_{\max},\,1.16\tau_k,\,
\max\{\tau_{\min},\,0.86\tau_k,\,\widehat\tau_{k+1}\}\right\}.
\end{equation}
The final few steps are adjusted to reach
$T=10$ while preserving the same adjacent-step ratio bounds.

Figure~\ref{fig:num_adaptive_steps} shows the time steps on the three
meshes. The insets display the common startup and the subsequent
separation. The adaptive step sizes respond to changes in the solution at
later times. The graded step sizes increase according to the
prescribed mesh formula, while the fixed-step mesh remains nearly
uniform after the common startup.

Table~\ref{tab:num_adaptive_cost} reports the maximum errors
$E_N^{\max}$ defined in \eqref{eq:num_max_error}, the total numbers
of accepted steps $N$, and the elapsed times. The common startup
is included in every computation, and all meshes reach $t_N=10$.
We report the elapsed time for one run of the time-stepping
procedure in MATLAB R2024b. Reference evaluation, error
postprocessing, and plotting are excluded.

For all three values of $\alpha$, the adaptive meshes use fewer
steps and less time than the fixed-step meshes, with larger errors.
The modified graded meshes have comparable or lower step counts
and runtimes than the adaptive meshes, but larger errors than both
the adaptive and fixed-step meshes.

\section{Conclusion}\label{sect5}
In this work, we have established $L^2$ error estimates for the L2 method on nonuniform temporal meshes by developing a cross-row recombination technique. The key is to use the
parameter of a fixed time level to recombine all
preceding rows, while preserving positivity and monotonicity. These properties provide the structure
needed to close the induction in the proof of the error analysis. The resulting error estimates allow for the initial weak singularity of the exact solution and apply to meshes with nonmonotone time steps. On modified graded meshes, we obtained the global convergence rate \(N^{-\min\{r\alpha,3-\alpha\}}\), recovering the optimal order \(3-\alpha\) when \(r\ge(3-\alpha)/\alpha\). Numerical experiments support the theoretical convergence estimates.

\appendix

\section{Coefficient bounds and rational reductions}
\label{App-sec:sm_lemma_35}
For the ratios in \eqref{eq:r1} and \eqref{eq:r2}, the
coefficient definitions give the formulas below.
The diagonal coefficient is
\begin{equation}\label{App-eq:diagonal-coefficient-formula}
m_{k,k}=\frac{(\alpha\tau_{k-1}+2\tau_k)
 (\tau_{k-1}+\tau_k)^{-\alpha}}
 {(2-\alpha)(1-\alpha)\tau_k},\qquad k\ge2.
\end{equation}
The coefficient definitions and \eqref{App-eq:diagonal-coefficient-formula} give
\begin{equation}\label{App-eq:sm_r1}
\begin{aligned}
r_1^{(k)}(\alpha)
={}&\frac{\rho_k}{1+\rho_{k-1}}\Biggl[
\frac{[\alpha+2\rho_{k-1}(1+\rho_k)]
      [1+\rho_{k-1}(1+\rho_k)]}
     {\rho_{k-1}(\alpha+2\rho_k)}
\left(\frac{\rho_{k-1}(1+\rho_k)}
           {1+\rho_{k-1}(1+\rho_k)}\right)^\alpha\\
&\hspace{38mm}-[1+\rho_{k-1}+\rho_{k-1}(1+\rho_k)]
\Biggr],\qquad k\ge3,
\end{aligned}
\end{equation}
and, for the separate row $k=3$,
\begin{equation}\label{App-r2k3}
r_2^{(3)}(\alpha)
=\frac{\rho_3\rho_2(1+\rho_3)}{1+\rho_2}
\left[\rho_2+
\left(\frac{2(1-\alpha)}{\alpha+2\rho_3}-\rho_2\right)
\left(1+\frac1{\rho_2(1+\rho_3)}\right)^{1-\alpha}\right].
\end{equation}
For $k\ge4$, the corresponding expression is
\begin{equation}\label{App-r_2}
\begin{aligned}
r_2^{(k)}(\alpha)
={}&\frac{\rho_k\rho_{k-1}(1+\rho_k)}{1+\rho_{k-2}}
\Biggl[
\left(1+\frac{1+\rho_{k-2}}
 {\rho_{k-2}\rho_{k-1}(1+\rho_k)}\right)^{1-\alpha}
\frac{\alpha+2\rho_{k-2}[1+\rho_{k-1}(1+\rho_k)]}
 {\alpha+2\rho_k}\\
&\quad-
\left(\frac{1+\rho_{k-1}(1+\rho_k)}
 {\rho_{k-1}(1+\rho_k)}\right)^{1-\alpha}
\frac{(1+2\rho_{k-2}+\rho_{k-1}\rho_{k-2})
 [\alpha+2\rho_{k-1}(1+\rho_k)]}
 {(\alpha+2\rho_k)(1+\rho_{k-1})}\\
&\quad+\frac{(1+\rho_{k-2})\rho_{k-1}}{1+\rho_{k-1}}
\Biggr].
\end{aligned}
\end{equation}
\subsection{Strict concavity}
\label{App-sec:concavity}
For $F$ in \eqref{eq:F_definition}, we prove
$\partial_\alpha^2F<0$ on $[0,1]\times[0.83,1.17]^3$.
Combined with \eqref{eq:F_endpoints}, this gives $F\ge4\alpha/25$.

Differentiating the powers in \eqref{App-eq:sm_r1} and \eqref{App-r_2}
introduces the logarithms of two history-distance ratios:
\begin{equation}\label{App-eq:history-distance-logarithms}
\begin{aligned}
\ell_{k,1}
&:=\log\frac{t_k-t_{k-3}}{t_k-t_{k-2}}
=\log\frac{1+\rho_{k-1}(1+\rho_k)}
 {\rho_{k-1}(1+\rho_k)},\qquad k\ge3,\\
\ell_{k,2}
&:=\log\frac{t_k-t_{k-4}}{t_k-t_{k-2}}
=\log\left(1+\frac{1+\rho_{k-2}}
 {\rho_{k-2}\rho_{k-1}(1+\rho_k)}\right),\qquad k\ge4.
\end{aligned}
\end{equation}

In terms of these two logarithms,
twice differentiating \eqref{eq:F_definition} gives
\begingroup
\allowdisplaybreaks[1]
\begin{align}\label{App-eq:sm_F_second}
\frac{\partial^2F}{\partial\alpha^2}
={}&e^{-\alpha\ell_{k,1}}\Biggl\{
\frac{2\rho_k[1+\rho_{k-1}(1+\rho_k)]}
 {\rho_{k-1}(1+\rho_{k-1})}
\Biggl[\ell_{k,1}[2+(2-\alpha)\ell_{k,1}]\\*
&\quad+2[\rho_{k-1}(1+\rho_k)-\rho_k]
\Biggl(\frac{2(2-\alpha)}{(\alpha+2\rho_k)^3}
 +\frac2{(\alpha+2\rho_k)^2}
 +\frac{2(2-\alpha)\ell_{k,1}}{(\alpha+2\rho_k)^2}
 +\frac{2\ell_{k,1}}{\alpha+2\rho_k}
 +\frac{(2-\alpha)\ell_{k,1}^2}{\alpha+2\rho_k}
\Biggr)\Biggr]\notag\\
&\quad+
\frac{4\rho_k[1+\rho_{k-1}(1+\rho_k)]
 (1+2\rho_{k-2}+\rho_{k-1}\rho_{k-2})}
 {(1+\rho_{k-2})(1+\rho_{k-1})}\notag\\*
&\qquad\times\Biggl[\ell_{k,1}^2
 +2[\rho_{k-1}(1+\rho_k)-\rho_k]
 \left(\frac{\ell_{k,1}^2}{\alpha+2\rho_k}
       +\frac{2\ell_{k,1}}{(\alpha+2\rho_k)^2}
       +\frac2{(\alpha+2\rho_k)^3}\right)\Biggr]\notag\\
&\quad-
\frac{4\rho_k\rho_{k-1}(1+\rho_k)}{1+\rho_{k-2}}
 e^{(1-\alpha)\ell_{k,2}+\alpha\ell_{k,1}}\notag\\*
&\qquad\times\Biggl[\ell_{k,2}^2
 +2\bigl[\rho_{k-2}[1+\rho_{k-1}(1+\rho_k)]-\rho_k\bigr]
 \left(\frac{\ell_{k,2}^2}{\alpha+2\rho_k}
       +\frac{2\ell_{k,2}}{(\alpha+2\rho_k)^2}
       +\frac2{(\alpha+2\rho_k)^3}\right)\Biggr]
\Biggr\}-2.\notag
\end{align}
\endgroup
Both logarithms are positive. Throughout the parameter domain,
\[
\rho_{k-1}(1+\rho_k)-\rho_k>0,
\qquad
\rho_{k-2}[1+\rho_{k-1}(1+\rho_k)]-\rho_k>0,
\]
and every denominator is positive. Thus the square-bracketed
expressions increase with their nonnegative logarithmic arguments.

For $\varrho>1$, the logarithmic series and a geometric bound
for its tail give $L_-(\varrho)\le\log\varrho\le L_+(\varrho)$, where

\begin{equation}\label{App-eq:sm_log_bounds}
\begin{aligned}
L_-(\varrho)&:=2\left[\frac{\varrho-1}{\varrho+1}
 +\frac13\left(\frac{\varrho-1}{\varrho+1}\right)^3\right], \qquad
L_+(\varrho):=L_-(\varrho)+
\frac{2\left(\dfrac{\varrho-1}{\varrho+1}\right)^5}
 {5\left[1-\left(\dfrac{\varrho-1}{\varrho+1}\right)^2\right]}.
\end{aligned}
\end{equation}
Using
\[
a^{1-\alpha}b^\alpha
\ge \left(\frac{1-\alpha}{a}+\frac{\alpha}{b}\right)^{-1},
\qquad a,b>0,\quad 0\le\alpha\le1,
\]
with $a=e^{\ell_{k,2}}$ and $b=e^{\ell_{k,1}}$,
together with the preceding logarithmic bounds, we obtain
\begin{equation}\label{App-eq:concavity-rational-replacements}
\begin{aligned}
\ell_{k,1}
&\le
L_+\left(\frac{1+\rho_{k-1}(1+\rho_k)}
 {\rho_{k-1}(1+\rho_k)}\right),\\
\ell_{k,2}
&\ge
L_-\left(1+\frac{1+\rho_{k-2}}
 {\rho_{k-2}\rho_{k-1}(1+\rho_k)}\right),\\
e^{(1-\alpha)\ell_{k,2}+\alpha\ell_{k,1}}
&\ge
\frac{[\rho_{k-2}\rho_{k-1}(1+\rho_k)+1+\rho_{k-2}]
 [1+\rho_{k-1}(1+\rho_k)]}
 {\rho_{k-1}(1+\rho_k)
  [\alpha+\rho_{k-2}[1+\rho_{k-1}(1+\rho_k)]]}.
\end{aligned}
\end{equation}
Define $\mathcal U_k$ from the expression inside the outer braces
in \eqref{App-eq:sm_F_second} by replacing the logarithms in the two
positive terms by the upper bound in
\eqref{App-eq:concavity-rational-replacements}, and replacing the
logarithm and the entire exponential factor in the subtracted
term by their respective lower bounds.
Then $\mathcal U_k$ is a rational function of
$(\alpha,\rho_k,\rho_{k-1},\rho_{k-2})$.
The preceding monotonicity and the arithmetic mean inequality give
\begin{equation}\label{App-eq:sm_power_upper}
\partial_\alpha^2F\le e^{-\alpha\ell_{k,1}}\mathcal U_k-2,
\qquad e^{-\alpha\ell_{k,1}}
\le1-\frac{\alpha}{1+\rho_{k-1}(1+\rho_k)}.
\end{equation}
If $\mathcal U_k\le0$, the second derivative is at most $-2$.
If $\mathcal U_k>0$, \eqref{App-eq:sm_power_upper} applies to its
positive multiplier. It therefore suffices to verify
\begin{equation}\label{App-eq:sm_rational_goal}
2-\left(1-\frac{\alpha}{1+\rho_{k-1}(1+\rho_k)}\right)
\mathcal U_k>0.
\end{equation}
A positive common denominator of the left-hand side is
\begin{equation}\label{App-eq:sm_denominator}
\begin{aligned}
D_{\mathrm{conc}}
={}&3600[\rho_{k-1}(1+\rho_k)]^2\rho_{k-1}
 (\alpha+2\rho_k)^3
 [1+\rho_{k-1}(1+\rho_k)]^2\\
&\times[1+2\rho_{k-1}(1+\rho_k)]^6
 (1+\rho_{k-1})(1+\rho_{k-2})
 [\alpha+\rho_{k-2}[1+\rho_{k-1}(1+\rho_k)]]\\
&\times[1+\rho_{k-2}
       +2\rho_{k-2}\rho_{k-1}(1+\rho_k)]^6.
\end{aligned}
\end{equation}
Appendix~\ref{App-sec:bernstein-certificates} certifies the polynomial
obtained by multiplying \eqref{App-eq:sm_rational_goal} by
$D_{\mathrm{conc}}$, including $3600$, without cancellation.
This proves strict concavity and, by \eqref{eq:F_endpoints},
$F(\alpha,\boldsymbol\rho)\ge4\alpha/25$.

\subsection{Scalar estimates for cross-row recombination}
\label{App-sec:cross-row-lemma}
\begin{lemma}[Scalar estimates]\label{App-lem:cross-row-recombination}
Let $0<\alpha<1$ and $\rho_j\in[0.86,1.16]$ for $j\ge2$.
For every fixed $k\ge3$ and $2\le n\le k$,
\begin{subequations}\label{App-eq:four-cross-row-margins}
\begin{align}
1-2\eta_n+\eta_k&\ge\frac15,\label{App-eq:cross-L1}\\
2\eta_n-\eta_k&\ge\frac{\alpha}{2},\label{App-eq:cross-L2}\\
2\eta_n-\eta_k-\frac{m_{n-1,n-1}}{m_{n,n}}\eta_k^2
&\ge\frac{\alpha}{10}.\label{App-eq:cross-L4}
\end{align}
\end{subequations}
\end{lemma}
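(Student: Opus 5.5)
The plan is to reduce all three inequalities to scalar bounds on the individual parameters $\eta_j$ and on the diagonal ratio $m_{n-1,n-1}/m_{n,n}$, which depend only on $\alpha$ and a few adjacent step ratios.

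\textbf{Bounds on $\eta_j$.} From \eqref{eta}, \eqref{App-eq:sm_r1} and \eqref{eq:startup-original-coefficients} we have $\eta_j=\tfrac12(1-r_1^{(j)}(\alpha))$. For $j\ge3$ this is an explicit function of $(\alpha,\rho_j,\rho_{j-1})$; for $j=2$ it is an explicit function of $(\alpha,\rho_2)$, namely $2\eta_2=1-\rho_2[2(1-\alpha)-\alpha\rho_2]/(\alpha+2\rho_2)$. I would establish a two-sided band
\[
\underline\eta(\alpha)\le\eta_j\le\overline\eta(\alpha),\qquad j\ge3,
\]
valid uniformly for $\rho\in[0.86,1.16]^2$. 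This is the band \eqref{App-eq:eta-band-main} invoked in Lemma~\ref{lm:eta_prop}, with $\overline\eta-\underline\eta\le 4\alpha/25$. The endpoint values are $\eta_j\to0$ as $\alpha\to0$ (since $r_1\to1$) and $\eta_j\approx\tfrac12$ at $\alpha=1$. This suggests affine-in-$\alpha$ bands of the form $c_1\alpha\le\eta_j\le c_2\alpha$, with $c_1\approx 0.4$ or so. I would certify them by the same route as Appendix~\ref{App-sec:concavity}: replace the power $(\cdot)^\alpha$ by rational bounds via the log bounds \eqref{App-eq:sm_log_bounds} and a weighted AM--GM, then certify the resulting polynomial positivity on the box by Bernstein expansion. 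The $j=2$ case is handled separately and exactly, since it is already rational.

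\textbf{Deriving \eqref{App-eq:cross-L1} and \eqref{App-eq:cross-L2}.} Write $2\eta_n-\eta_k=\eta_n+(\eta_n-\eta_k)\ge\eta_n-4\alpha/25$ by \eqref{eq:eta_variation_bound} for $n\ge3$. It then suffices to have $\underline\eta(\alpha)\ge \alpha/2+4\alpha/25=0.66\alpha$. That looks too strong, so the band must really be used directly: $2\underline\eta-\overline\eta\ge\alpha/2$, i.e.\ $\underline\eta\ge\alpha/2+4\alpha/25$ only if the band is tight. If that fails, I would instead prove jointly $2\eta_n-\eta_k\ge\alpha/2$ as a function of the at most four ratios $(\rho_n,\rho_{n-1},\rho_k,\rho_{k-1})$, taking the minimum of $\eta_n$ over its box and the maximum of $\eta_k$ over its independent box.

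Similarly, \eqref{App-eq:cross-L1} follows from $1-2\overline\eta+\underline\eta\ge1/5$, using $\eta_*<1$ and $\eta\lesssim0.55$. The case $n=2$ requires pairing the exact $\eta_2$ with the band for $\eta_k$, $k\ge3$.

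\textbf{Deriving \eqref{App-eq:cross-L4}.} Using \eqref{App-eq:diagonal-coefficient-formula}, the ratio $m_{n-1,n-1}/m_{n,n}$ is explicit in $(\alpha,\rho_n,\rho_{n-1})$ for $n\ge3$, and given by the formula preceding \eqref{eq:startup-diagonal-comparison} for $n=2$. I would bound it above by a constant $\mu(\alpha)$, e.g.\ via monotonicity in $\alpha$ of its logarithm as in the $k=2$ computation, which gives a bound near $1$ at $\alpha\to0$. The inequality then reduces to
\[
2\underline\eta-\overline\eta-\mu\,\overline\eta^{\,2}\ge\alpha/10.
\]
Since $\overline\eta^{\,2}=O(\alpha^2)$, the $\alpha/2$ margin from \eqref{App-eq:cross-L2} absorbs it for small $\alpha$. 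Near $\alpha=1$, where $\eta\approx1/2$, a careful numerical bound is needed.

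\textbf{Main obstacle.} The hardest step is obtaining sharp enough uniform bands for $\eta_j$ and for the diagonal ratio near $\alpha=1$, where the margins are thin. I expect to need computer-certified polynomial inequalities (Bernstein certificates) rather than a hand argument, and possibly a split of the $\alpha$-interval into subintervals.
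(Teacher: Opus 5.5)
You have the right architecture: rational enclosures of $\eta_j$ and of $m_{n-1,n-1}/m_{n,n}$, followed by Bernstein certificates, with $\eta_n$ and $\eta_k$ treated as functions of independent ratio pairs. Your fallback for \eqref{App-eq:cross-L2}, and the analogous treatment of \eqref{App-eq:cross-L1}, is what the paper's product-box certificates do.

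\textbf{The gap is in \eqref{App-eq:cross-L4}.} You reduce it to $2\underline\eta-\overline\eta-\mu\,\overline\eta^{\,2}\ge\alpha/10$, with a ratio bound $\mu(\alpha)$ that is independent of $\eta_n$. For $n\ge3$ this inequality is false near $\alpha=1$, even with the sharpest possible $\underline\eta$, $\overline\eta$, $\mu$. Let $\alpha\to1^-$; there $L_k^\alpha$ becomes variable-step BDF2. Then \eqref{App-eq:sm_r1} and the identity in \eqref{App-eq:diagonal-ratio-upper-main} give
\[
\eta_j=\frac{(1+\rho_j)^2}{2(1+2\rho_j)}\in[0.6360,\,0.7027],
\qquad
\frac{m_{n-1,n-1}}{m_{n,n}}=\frac{\rho_n(1+\rho_n)(1+2\rho_{n-1})}{(1+2\rho_n)(1+\rho_{n-1})}\le 1.16 .
\]
Hence $2\underline\eta-\overline\eta-\mu\,\overline\eta^{\,2}\approx 1.2719-0.7027-1.16\cdot0.4937\approx-0.0035$, far below $0.1$. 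By continuity your reduction fails on some interval $(1-\varepsilon,1)$, so subdividing in $\alpha$ cannot repair it.

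The missing idea is that the diagonal ratio is near $1.16$ only when $\rho_n$ is near $1.16$. That same condition puts $\eta_n$ near its maximum. So the lower bound for $\eta_n$ and the upper bound for $m_{n-1,n-1}/m_{n,n}$ must stay functions of the same pair $(\rho_{n-1},\rho_n)$ inside one certificate. This is exactly \eqref{App-eq:supp-target-ordinary-L4}, certified on the box in $(\alpha,\rho_{n-1},\rho_n,\rho_{k-1},\rho_k)$. With this coupling, the worst case at $\alpha=1$ is about $0.123$.

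\textbf{Your numerical estimates are off, and the margins are thin.}
\begin{itemize}
\item At $\alpha=1$ one has $\eta_j\approx 2/3$, not $1/2$.
\item As $\alpha\to0$, $\eta_j/\alpha\ge 0.92-o(1)$. Affine bands $c_1\alpha\le\eta_j\le c_2\alpha$ therefore give at most $2c_1-c_2\le 2(0.636)-0.92<\tfrac12$, so they cannot prove \eqref{App-eq:cross-L2}.
\item The fixed-width band \eqref{App-eq:eta-band-main} gives only $1-2\overline\eta+\underline\eta\approx0.093$ at $\alpha=1$. The true margin of \eqref{App-eq:cross-L1} there is only about $0.03$ above $1/5$.
\end{itemize}
So the enclosures must be exact at both endpoints $\alpha=0$ and $\alpha=1$. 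The paper gets this from the weighted harmonic and arithmetic mean bounds
\[
\frac{x}{\alpha+x}\le\Bigl(\frac{x}{1+x}\Bigr)^{\alpha}\le 1-\frac{\alpha}{1+x},\qquad x=\rho_{n-1}(1+\rho_n).
\]
These produce $\eta_n^{1,\pm}$ directly, and the arithmetic mean bound handles the diagonal ratio; the paper does not use the logarithmic-series route you propose here. For $n=2$ it uses the exact $\eta_2$ together with $(1+\rho_2)^\alpha\le1+\alpha\rho_2$.
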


\begin{proof}
For $n\ge3$, use $\eta_n=(1-r_1^{(n)})/2$ from \eqref{eta}
and the weighted harmonic and arithmetic mean bounds
\[
\frac{\rho_{n-1}(1+\rho_n)}{\alpha+\rho_{n-1}(1+\rho_n)}
\le
\left(\frac{\rho_{n-1}(1+\rho_n)}{1+\rho_{n-1}(1+\rho_n)}\right)^\alpha
\le1-\frac{\alpha}{1+\rho_{n-1}(1+\rho_n)}.
\]
Using \eqref{App-eq:sm_r1} in \eqref{eta}, these power bounds yield
the rational lower and upper bounds for $\eta_n$:
\begin{align}
\eta_n^{1,-}
&:=\frac12\Biggl\{1-\frac{\rho_n}{1+\rho_{n-1}}
\Biggl[
\frac{[\alpha+2\rho_{n-1}(1+\rho_n)]
[1+\rho_{n-1}(1+\rho_n)-\alpha]}
{\rho_{n-1}(\alpha+2\rho_n)}
\notag\\[-1mm]
&\hspace{43mm}-[1+\rho_{n-1}+\rho_{n-1}(1+\rho_n)]\Biggr]\Biggr\},
\label{App-eq:supp-eta-H-minus}\\
\eta_n^{1,+}
&:=\frac12\Biggl\{1-\frac{\rho_n}{1+\rho_{n-1}}
\Biggl[
\frac{[\alpha+2\rho_{n-1}(1+\rho_n)]
[1+\rho_{n-1}(1+\rho_n)](1+\rho_n)}
{(\alpha+2\rho_n)[\alpha+\rho_{n-1}(1+\rho_n)]}
\notag\\[-1mm]
&\hspace{43mm}-[1+\rho_{n-1}+\rho_{n-1}(1+\rho_n)]\Biggr]\Biggr\}.
\label{App-eq:supp-eta-H-plus}
\end{align}
Thus
\begin{equation}\label{App-eq:eta-harmonic-bracket}
\eta_n^{1,-}\le\eta_n\le\eta_n^{1,+},\qquad n\ge3.
\end{equation}
The superscript $1$ denotes these mean bounds; replacing $n$ by $k$
defines $\eta_k^{1,\pm}$.
For the diagonal coefficients, the arithmetic mean bound also gives
\begin{equation}\label{App-eq:diagonal-ratio-upper-main}
\begin{aligned}
\frac{m_{n-1,n-1}}{m_{n,n}}
&=\frac{\rho_n(\alpha+2\rho_{n-1})}{\rho_{n-1}(\alpha+2\rho_n)}
\left(\frac{\rho_{n-1}(1+\rho_n)}{1+\rho_{n-1}}\right)^\alpha\le\frac{\rho_n(\alpha+2\rho_{n-1})}{\rho_{n-1}(\alpha+2\rho_n)}
\left[1-\alpha+\frac{\alpha\rho_{n-1}(1+\rho_n)}{1+\rho_{n-1}}\right].
\end{aligned}
\end{equation}
Appendix~\ref{App-sec:bernstein-certificates} proves positivity of
the following rational expressions for $n,k\ge3$:
\begin{align}
&1-2\eta_n^{1,+}+\eta_k^{1,-}-\frac15,
\label{App-eq:supp-target-ordinary-L1}\\
&\frac{2\eta_n^{1,-}-\eta_k^{1,+}}{\alpha}-\frac12,
\label{App-eq:supp-target-ordinary-L2}\\
&\frac1\alpha\Biggl\{2\eta_n^{1,-}-\eta_k^{1,+}
-\frac{\rho_n(\alpha+2\rho_{n-1})}{\rho_{n-1}(\alpha+2\rho_n)}
\left[1-\alpha+\frac{\alpha\rho_{n-1}(1+\rho_n)}{1+\rho_{n-1}}\right]
(\eta_k^{1,+})^2\Biggr\}-\frac1{10}.
\label{App-eq:supp-target-ordinary-L4}
\end{align}
Taking $n=k$ in \eqref{App-eq:supp-target-ordinary-L2} gives
$0<\alpha/2\le\eta_k^{1,-}\le\eta_k\le\eta_k^{1,+}$,
so $\eta_k^2\le(\eta_k^{1,+})^2$.
Together with \eqref{App-eq:eta-harmonic-bracket} and
\eqref{App-eq:diagonal-ratio-upper-main}, the three certified
inequalities prove the assertions for $n\ge3$.

For $n=2$, we instead use the explicit startup formulas
\begin{equation}\label{App-eq:eta2-main}
\eta_2=\frac{\alpha(1+\rho_2)^2}{2(\alpha+2\rho_2)},\qquad
\frac{m_{1,1}}{m_{2,2}}
=\frac{(2-\alpha)\rho_2(1+\rho_2)^\alpha}{\alpha+2\rho_2}
\le\frac{(2-\alpha)\rho_2(1+\alpha\rho_2)}{\alpha+2\rho_2}.
\end{equation}
The corresponding certified positive expressions are
\begin{align}
&1-2\eta_2+\eta_k^{1,-}-\frac15,
\label{App-eq:supp-target-startup-L1}\\
&\frac{2\eta_2-\eta_k^{1,+}}{\alpha}-\frac12,
\label{App-eq:supp-target-startup-L2}\\
&\frac1\alpha\left[2\eta_2-\eta_k^{1,+}
-\frac{(2-\alpha)\rho_2(1+\alpha\rho_2)}{\alpha+2\rho_2}
(\eta_k^{1,+})^2\right]-\frac1{10}.
\label{App-eq:supp-target-startup-L4}
\end{align}
Using \eqref{App-eq:eta2-main} and the same bounds for $\eta_k$
proves the three assertions for $n=2$.
\end{proof}

\subsection{Variation of \texorpdfstring{$\eta$}{eta} and the row \texorpdfstring{$n=3$}{n=3}}
\label{App-sec:eta-Taylor-bounds}
For the variation estimate, sharper bounds for the power in
\eqref{App-eq:sm_r1} follow from quadratic and cubic Taylor bounds.
For $j\ge3$, define
\begin{align}
\mathscr E_-^{(j)}
&:=1-\alpha L_+\!\left(1+\frac1{\rho_{j-1}(1+\rho_j)}\right)
+\frac{\alpha^2}{2}L_+^2\!\left(1+\frac1{\rho_{j-1}(1+\rho_j)}\right)
\notag\\
&\qquad-\frac{\alpha^3}{6}L_+^3\!\left(1+\frac1{\rho_{j-1}(1+\rho_j)}\right),
\label{App-eq:supp-power-Taylor-minus}\\
\mathscr E_+^{(j)}
&:=1-\alpha L_-\!\left(1+\frac1{\rho_{j-1}(1+\rho_j)}\right)
+\frac{\alpha^2}{2}L_-^2\!\left(1+\frac1{\rho_{j-1}(1+\rho_j)}\right).
\label{App-eq:supp-power-Taylor-plus}
\end{align}
These rational expressions satisfy
\begin{equation}\label{App-eq:supp-power-Taylor-order}
\mathscr E_-^{(j)}
\le\left(\frac{\rho_{j-1}(1+\rho_j)}{1+\rho_{j-1}(1+\rho_j)}\right)^\alpha
\le\mathscr E_+^{(j)}.
\end{equation}
Indeed, $1-v+v^2/2-v^3/6\le e^{-v}\le1-v+v^2/2$ for $v\ge0$.
The decreasing cubic and $L_+$ give the lower bound; applying
the quadratic bound to $e^{-\alpha L_-}$ gives the upper bound.
Substitution into \eqref{App-eq:sm_r1} and use of
\eqref{eta} yield
\begin{equation}
\label{App-eq:supp-eta-T-minus}
\begin{aligned}
\eta_j^{2,\pm}
:={}&\frac12\Biggl\{1-\frac{\rho_j}{1+\rho_{j-1}}
\Biggl[
\frac{[\alpha+2\rho_{j-1}(1+\rho_j)][1+\rho_{j-1}(1+\rho_j)]}
{\rho_{j-1}(\alpha+2\rho_j)}\mathscr E_\mp^{(j)}\\
&\hspace{32mm}-[1+\rho_{j-1}+\rho_{j-1}(1+\rho_j)]\Biggr]\Biggr\}.
\end{aligned}
\end{equation}
Here $2$ denotes the Taylor bounds, with $\mp$ opposite to $\pm$; hence
\begin{equation}\label{App-eq:supp-eta-T-bracket}
\eta_j^{2,-}\le\eta_j\le\eta_j^{2,+},\qquad j\ge3.
\end{equation}
Appendix~\ref{App-sec:bernstein-certificates} verifies that
\begin{align}
&\frac{\eta_j^{2,-}}{\alpha}
-\left(1-\frac{\alpha}{3}-\frac{\alpha(1-\alpha)}{10}-\frac2{25}\right),
\label{App-eq:supp-target-eta-lower}\\
&1-\frac{\alpha}{3}-\frac{\alpha(1-\alpha)}{10}+\frac2{25}
-\frac{\eta_j^{2,+}}{\alpha}
\label{App-eq:supp-target-eta-upper}
\end{align}
are positive. Consequently,
\begin{equation}\label{App-eq:eta-band-main}
\alpha\left(1-\frac{\alpha}{3}-\frac{\alpha(1-\alpha)}{10}-\frac2{25}\right)
\le\eta_j\le
\alpha\left(1-\frac{\alpha}{3}-\frac{\alpha(1-\alpha)}{10}+\frac2{25}\right),
\qquad j\ge3.
\end{equation}
The two bounds differ by $4\alpha/25$, proving
$|\eta_k-\eta_n|\le4\alpha/25$ for $n,k\ge3$.

They also give the uniform estimates
\begin{equation}\label{App-eq:eta-uniform-main}
\frac{337\alpha}{600}\le\eta_j\le\frac{56}{75}<\eta_*
\quad(j\ge3),\qquad
0<\eta_2\le\frac{1458}{2075}<\eta_*.
\end{equation}
Indeed, the lower bracket in \eqref{App-eq:eta-band-main} is at least
$1-1/3-1/40-2/25=337/600$, and the upper bound does not exceed
$\alpha(27/25-\alpha/3)\le56/75$.
Increasing $\alpha$ to $1$ in \eqref{App-eq:eta2-main} gives
$\eta_2\le(1+\rho_2)^2/[2(1+2\rho_2)]$; this bound increases
with $\rho_2$ and is at most $1458/2075$ for $\rho_2\le1.16$.

For $F_3$, the factor $2(1-\alpha)/(\alpha+2\rho_3)-\rho_2$
in \eqref{App-r2k3} may have either sign.
Multiplying \eqref{App-eq:supp-power-Taylor-order} by
$1+1/[\rho_2(1+\rho_3)]$ gives an upper bound for $r_2^{(3)}$
using $\mathscr E_+^{(3)}$ when this factor is nonnegative and
$\mathscr E_-^{(3)}$ otherwise:
\begin{equation}\label{App-eq:supp-r2-three-plus}
\begin{aligned}
r_{2,\mathrm{up}}^{(3),\pm}
:={}&\frac{\rho_3\rho_2(1+\rho_3)}{1+\rho_2}
\Biggl[\rho_2+
\left(\frac{2(1-\alpha)}{\alpha+2\rho_3}-\rho_2\right)
\frac{1+\rho_2(1+\rho_3)}{\rho_2(1+\rho_3)}
\mathscr E_\pm^{(3)}\Biggr].
\end{aligned}
\end{equation}
Since $r_1^{(3)}=1-2\eta_3\ge1-2\eta_3^{2,+}$, in each
of the corresponding sign cases we obtain
\begin{equation}\label{App-eq:supp-F3-bounds}
F_3\ge F_{3,\mathrm{low}}^\pm
:=(4-2\alpha)(1-2\eta_3^{2,+})
-4r_{2,\mathrm{up}}^{(3),\pm}+2\alpha-\alpha^2.
\end{equation}
Both rational expressions
\begin{equation}\label{App-eq:supp-target-F3}
\frac{F_{3,\mathrm{low}}^+}{\alpha}-\frac4{25},
\qquad
\frac{F_{3,\mathrm{low}}^-}{\alpha}-\frac4{25}
\end{equation}
are positive on the whole domain $[0,1]\times[0.83,1.17]^2$,
as verified below. Applying the appropriate bound in each sign
case proves $F_3\ge4\alpha/25$.

\section{Exact Bernstein certificates}
\label{App-sec:bernstein-certificates}
\label{App-sec:Bernstein-coefficients}
All coefficients below are computed with exact rational arithmetic. Ratios in $[a,b]$ are normalized by
$\rho=a+(b-a)\xi$, $0\le\xi\le1$, while $\xi_0=\alpha$.
For a polynomial in ${\xi}=(\xi_0,\ldots,\xi_s)$ of
multidegree ${d}$, the power and
Bernstein representations are related by
\begin{equation}\label{App-eq:supp-general-Bernstein-transform}
\begin{aligned}
P({\xi})
&=\sum_{0\le{\nu}\le{d}}
c_{{\nu}}{\xi}^{{\nu}}
=\sum_{0\le{i}\le{d}}
b_{{i}}\prod_{j=0}^{s}
\binom{d_j}{i_j}\xi_j^{i_j}(1-\xi_j)^{d_j-i_j},
\qquad
b_{{i}}
=\sum_{0\le{\nu}\le{i}}
c_{{\nu}}\prod_{j=0}^{s}
\frac{\binom{i_j}{\nu_j}}{\binom{d_j}{\nu_j}}.
\end{aligned}
\end{equation}
Inequalities between multi-indices are componentwise.
The Bernstein basis functions are nonnegative and sum to one.
Hence $\min_i b_i\le P(\xi)\le\max_i b_i$ for all
$\xi\in[0,1]^{s+1}$, so a positive minimum coefficient
ensures that $P$ is positive throughout this domain.

For concavity, multiply \eqref{App-eq:sm_rational_goal} by
$D_{\mathrm{conc}}$ in \eqref{App-eq:sm_denominator}, including
$3600$, without cancellation.
Normalize the ratios in the order $(\rho_k,\rho_{k-1},\rho_{k-2})$
using $a=83/100$ and $b=117/100$.
The resulting polynomial has multidegree $(6,21,19,8)$ in
$(\alpha,\xi_1,\xi_2,\xi_3)$.
All $27{,}720$ coefficients are positive, with minimum
\begin{equation}\label{App-eq:sm_minimum}
b_{4,0,0,0}>97923459813770>0
\qquad
\bigl(b_{4,0,0,0}\approx9.79234598137709\times10^{13}\bigr).
\end{equation}

Substitute all definitions and reduce each of the ten scalar
expressions to $N/D$ with integer coefficients.
Normalize the combined coefficient greatest common divisor to one,
and choose the common sign so that $D$ is positive at the
midpoint of the corresponding parameter domain.
Values at $\alpha=0$ are defined by continuity.
Throughout, $\alpha\in[0,1]$ and all ratios lie in $[0.86,1.16]$,
except in \eqref{App-eq:supp-target-F3}, where they lie in $[0.83,1.17]$.
The variable orders are
$(\alpha,\rho_{n-1},\rho_n,\rho_{k-1},\rho_k)$ for
\eqref{App-eq:supp-target-ordinary-L1}--\eqref{App-eq:supp-target-ordinary-L4};
$(\alpha,\rho_2,\rho_{k-1},\rho_k)$ for
\eqref{App-eq:supp-target-startup-L1}--\eqref{App-eq:supp-target-startup-L4};
$(\alpha,\rho_{j-1},\rho_j)$ for
\eqref{App-eq:supp-target-eta-lower} and \eqref{App-eq:supp-target-eta-upper};
and $(\alpha,\rho_2,\rho_3)$ for \eqref{App-eq:supp-target-F3}.
After affine normalization, \eqref{App-eq:supp-general-Bernstein-transform}
is applied to both numerator and denominator.
All denominator coefficients are strictly positive.
Table~\ref{App-tab:supp-certificate-summary} gives the numerator data;
its indices use the normalized variables in the orders above.

\begin{table}[htbp]
\centering
\small
\setlength{\tabcolsep}{4pt}
\caption{Bernstein coefficient data for the normalized numerator
polynomials. The minimum coefficients are reported approximately;
their positivity is verified using exact arithmetic.}
\label{App-tab:supp-certificate-summary}
\begin{tabular}{@{}lccccc@{}}
\hline
Expression & Multidegree & Count & Nonpositive
& \shortstack{Minimum\\index} & \shortstack{Minimum\\coefficient}\\
\hline
\eqref{App-eq:supp-target-ordinary-L1}
& \((4,2,3,2,2)\) & \(540\) & \(0\)
& \((4,0,3,0,0)\) & \(2.353582990\times10^{1}\)\\
\eqref{App-eq:supp-target-ordinary-L2}
& \((3,2,2,2,3)\) & \(432\) & \(0\)
& \((0,0,1,0,0)\) & \(4.744346169\times10^{-1}\)\\
\eqref{App-eq:supp-target-ordinary-L4}
& \((5,2,2,4,6)\) & \(1890\) & \(0\)
& \((0,0,0,0,0)\) & \(6.294892570\times10^{2}\)\\
\eqref{App-eq:supp-target-startup-L1}
& \((3,2,2,2)\) & \(108\) & \(0\)
& \((3,2,0,0)\) & \(4.428076704\times10^{0}\)\\
\eqref{App-eq:supp-target-startup-L2}
& \((3,2,2,3)\) & \(144\) & \(0\)
& \((0,0,0,0)\) & \(6.150987181\times10^{0}\)\\
\eqref{App-eq:supp-target-startup-L4}
& \((5,2,4,6)\) & \(630\) & \(0\)
& \((0,0,0,0)\) & \(6.751226672\times10^{2}\)\\
\eqref{App-eq:supp-target-eta-lower}
& \((3,8,8)\) & \(324\) & \(0\)
& \((1,0,0)\) & \(2.785587141\times10^{5}\)\\
\eqref{App-eq:supp-target-eta-upper}
& \((3,16,16)\) & \(1156\) & \(0\)
& \((0,0,0)\) & \(9.949499105\times10^{11}\)\\
\eqref{App-eq:supp-target-F3}, \(F_{3,\mathrm{low}}^+\)
& \((4,16,16)\) & \(1445\) & \(0\)
& \((4,0,0)\) & \(7.988977703\times10^{11}\)\\
\eqref{App-eq:supp-target-F3}, \(F_{3,\mathrm{low}}^-\)
& \((4,16,16)\) & \(1445\) & \(0\)
& \((4,0,16)\) & \(5.201161800\times10^{10}\)\\
\hline
\end{tabular}
\end{table}

The smallest of these ten positive minima corresponds to
\eqref{App-eq:supp-target-ordinary-L2} and has the exact value
\(
\frac{18532602222867}{39062500000000}>\frac{47}{100}
\).
Since the denominator Bernstein coefficients are also strictly
positive, all ten rational expressions are positive on their
respective parameter domains. This completes the scalar estimates,
the row-to-row variation bound, and the proof for $n=3$.

\section*{Acknowledgements}
 C. Quan is supported by the National Natural Science Foundation of China (Grant No. 12622124) and Hetao Shenzhen-Hong Kong Science and Technology  Innovation Cooperation Zone Project (No. HZQSWS-KCCYB-2024016). The work of J. Yang is supported by the National Science Foundation of China (No.12271240, 12426312) and the Shenzhen Natural Science Fund (RCJC20210609103819018).


\section*{Declarations}
The authors used GPT-6 Astra (OpenAI)
through Codex with the Ultra reasoning setting to assist with language
polishing and checking the associated
Mathematica code for verifying polynomial
positivity using Bernstein coefficients.
The authors assume responsibility for all content.

\bibliography{bibfile}
\bibliographystyle{amsplain}

\end{document}